\numberwithin{equation}{section}
\numberwithin{figure}{section}
\theoremstyle{plain}
\newtheorem{thm}{\protect\theoremname}
\theoremstyle{remark}
\newtheorem*{rem*}{\protect\remarkname}
\theoremstyle{plain}
\newtheorem{cor}[thm]{\protect\corollaryname}
\theoremstyle{plain}
\newtheorem{lem}[thm]{\protect\lemmaname}
\theoremstyle{plain}
\newtheorem*{fact*}{\protect\factname}
\providecommand{\corollaryname}{Corollary}
\providecommand{\factname}{Fact}
\providecommand{\lemmaname}{Lemma}
\providecommand{\remarkname}{Remark}
\providecommand{\theoremname}{Theorem}
\providecommand{\corollaryname}{Corollary}
\providecommand{\factname}{Fact}
\providecommand{\lemmaname}{Lemma}
\providecommand{\remarkname}{Remark}
\providecommand{\theoremname}{Theorem}
\begin{document}
\title[The existence of extremal functions for discrete inequalities]{The existence of extremal functions for discrete Sobolev inequalities
on lattice graphs}
\author{Bobo Hua, Ruowei Li}
\address{Bobo Hua: School of Mathematical Sciences, LMNS, Fudan University,
Shanghai 200433, People\textquoteright s Republic of China; Shanghai
Center for Mathematical Sciences, Fudan University, Shanghai 200433,
People\textquoteright s Republic of China}
\email{bobohua@fudan.edu.cn}
\address{Ruowei Li: School of Mathematical Sciences, Fudan University, Shanghai
200433, People\textquoteright s Republic of China; Shanghai Center
for Mathematical Sciences, Fudan University, Shanghai 200433, People\textquoteright s
Republic of China}
\email{rwli19@fudan.edu.cn}
\begin{abstract}
In this paper, we study the existence of extremal functions (pairs)
of the following discrete Sobolev inequality (\ref{eq:sobo1}) and
Hardy-Littlewood-Sobolev inequality (\ref{eq:hls1}) in the lattice
$\mathbb{Z}^{N}$: 
\begin{equation}
{\displaystyle \lVert u\rVert_{\ell^{q}}\leq C_{p,q}\lVert u\rVert_{D^{1,p}},\;\forall u\in D^{1,p}(\mathbb{Z}^{N}),}\label{eq:sobo1}
\end{equation}
where $N\geq3,1\leq p<N,q>p^{\ast}=\dfrac{Np}{N-p},C_{p,q}$ is a
constant depending on $N,p$ and $q$; 
\begin{equation}
\sum_{\substack{i,j\in\mathbb{Z}^{N}\\
i\neq j
}
}\frac{f(i)g(j)}{\mid i-j\mid^{\lambda}}\leq C_{r,s,\lambda}\Vert f\Vert_{\ell^{r}}\Vert g\Vert_{\ell^{s}},\:\forall f\in\ell^{r}(\mathbb{Z}^{N}),g\in\ell^{s}(\mathbb{Z}^{N}),\label{eq:hls1}
\end{equation}
where $r,s>1,$ $0<\lambda<N$, $\frac{1}{r}+\frac{1}{s}+\frac{\lambda}{N}>2$,
$C_{r,s,\lambda}$ is a constant depending on $N,r,s$ and $\lambda$.

We introduce the discrete Concentration-Compactness principle, and
prove the existence of extremal functions (pairs) for the best constants
in the supercritical cases $q>p^{\ast}$ and $\frac{1}{r}+\frac{1}{s}+\frac{\lambda}{N}>2$,
respectively. 
\end{abstract}

\maketitle

\section{Introduction}

For $N\geq3,\,k\geq1,\,p\geq1,\,kp<N,\frac{1}{\,p^{'}}=\frac{1}{p}-\frac{k}{N}$,
we have the classical Sobolev inequality\label{eq:c-Sobo-ineq}, 
\begin{equation}
{\displaystyle \lVert u\rVert_{p^{'}}^{p}\leq C_{p,q}\lVert u\rVert_{D^{k,p}}^{p},\;\forall u\in D^{k,p}\left(\mathbb{R}^{\mathit{N}}\right),}\label{eq:classical}
\end{equation}
where $C_{p,q}$ is a constant depending on $N,p$ and $q$, we omit
the dependence of constants $N$ for convenience, and $D^{k,p}\left(\mathbb{R}^{\mathit{N}}\right)$
denotes the completion of $C_{0}^{\infty}\left(\mathbb{R}^{\mathit{N}}\right)$
in the norm $\lVert u\rVert_{D^{k,p}}^{p}\coloneqq\sum\limits _{|\alpha|=k}\varint\lvert D^{\alpha}u\rvert^{p}\mathbf{\mathrm{dx}}$.

Whether the best constant can be obtained by some $u\in D^{k,p}\left(\mathbb{R}^{\mathit{N}}\right)$,
which is called the extremal function, has been intensively investigated
in the literature. When $k=1,$$\,\,p=1,$ H. Federer, W. Fleming
\cite{FF} and W. Fleming, R. Rishel \cite{FR} proved that the best
constant is the isoperimetric constant and the extremal function is
the characteristic function of a ball. Using Schwarz symmetrization
\cite{PS}, best constants and extremal functions were obtained by
Talenti \cite{T1}, Rodemich \cite{R} and Aubin \cite{A} independently
in the case of $k=1,p>1$. Moreover, Talenti's paper \cite{T2} reveals
the deep relation between isoperimetric inequalites and Sobolev inequalities,
see also Cianchi \cite{C1}. For $k>1,$ P. L. Lions \cite{L1,L2,L3,L4}
established the Concentration-Compactness method, which provided a
new idea for proving the existence of extremal functions. The general
idea is as follows. The best constant in the Sobolev inequality (\ref{eq:classical})
is given by 
\[
S:=\inf_{\substack{u\in D^{k,p}(\mathbb{R}^{N})\\
\lVert u\rVert_{p^{\prime}}=1
}
}\lVert u\rVert_{D^{k,p}}^{p}>0.
\]
Take a minimizing sequence $\{u_{n}\}$ and regard $\left\{ |u_{n}|^{p^{\prime}}\mathrm{dx}\right\} $
as a sequence of probability measures. He proved in \cite{L1,L3}
that there are three cases of the limit of the sequence: compactness,
vanishing and dichotomy. Vanishing and dichotomy are ruled out by
the rescaling trick and subadditivity inequality. Therefore, the extremal
function exists by the compactness. Since the Concentration-Compactness
principle requires weak convergence $u_{n}\xrightharpoonup{}u\;\hphantom{}\text{in}\;D^{k,p}(\mathbb{Z}^{N})$,
this method does not apply the case of $p=1$.

For $r,s>1,$ $0<\lambda<N$, $\frac{1}{r}+\frac{1}{s}+\frac{\lambda}{N}=2$,
we have the classical Hardy-Littlewood-Sobolev (HLS for abbreviation)
inequality in $\mathbb{R}^{\mathit{N}}$ \cite{HL1,HL2,NP}, 
\begin{equation}
\iint\frac{f(x)g(y)}{\mid x-y\mid^{\lambda}}dxdy\leq C_{r,s,\lambda}\Vert f\Vert_{L^{r}}\Vert g\Vert_{L^{s}},\:\forall f\in L^{r}(\mathbb{R}^{\mathit{N}}),g\in L^{s}(\mathbb{R}^{\mathit{N}}).\label{eq:c-hls}
\end{equation}
In \cite{L5}, Lieb proved the existence of the maximizing pair $(f,g)$,
i.e. a pair that gives equality in (\ref{eq:c-hls}). He also gave
the explicit $(f,g)$ and best constant in the case $r=s$. He's method
requires rearrangement inequalities to exclude the vanishing case
and a compactness technique for maximizing sequences \cite[Lemma 2.7]{L5},
which is induced by the Brézis-Lieb lemma \cite{BL}. It also applies
to Sobolev inequalities, doubly weighted HLS inequalities and weighted
Young inequalities. Similar to Sobolev inequalities, Lions \cite{L4}
also proved the existence of extreme functions for HLS inequalities
by the Concentration-Compactness principle.

In recent years, people paid attention to the analysis on graphs.
Since the Sobolev inequalities and HLS inequalities are useful analytical
tools, they have been extended to the discrete setting \cite{CY,HLY,O1}.
For finite graphs, Sobolev inequalities and sharp constants have been
obtained by \cite{NKW,NKYTW,SST,TNK,YKWNT,YWK}. In this article,
we study extremal functions of the discrete Sobolev inequality (\ref{eq:sobo1})
in the lattice $\mathbb{Z}^{N}$. Next, we consider the discrete HLS
inequality (\ref{eq:hls1}). For $N=1$, (\ref{eq:hls1}) is just
the Hardy-Littlewood-P$\acute{\textrm{o}}$lya inequality \cite{HLP}.
In \cite{LV,CL}, the authors considered (\ref{eq:hls1}) in finite
subgraphs of $\mathbb{Z}^{N}$. For the supercritical case $\frac{1}{r}+\frac{1}{s}+\frac{\lambda}{N}>2$,
Huang, Li and Yin \cite{HLY} proved the existence of the maximizing
pair for (\ref{eq:hls1}) on $\mathbb{Z}^{N}$ by analyzing the Euler-Lagrange
equation on bounded subsets with Dirichlet boundary condition and
taking the exhaustion. Inspired by this paper, we also consider (\ref{eq:hls1})
in the supercritical case and get the result by another proof.

A simple and undirected graph $G=(V,E)$ consists of the set of vertices
$V$ and the set of edges $E$. Two vertices $x,y$ are called neighbours,
denoted by $x\thicksim y,$ if there is an edge $e$ connecting $x$
and $y$, i.e. $e=\left\{ x,y\right\} \in E$. In this paper, we mainly
consider integer lattice graphs which serve as the discrete counterparts
of $\mathbb{R}^{N}$. The $N$-dimensional integer lattice graph,
denoted by $\mathbb{Z}^{N}$, is the graph consisting of the set of
vertices $V=\mathbb{Z}^{N}$ and the set of edges 
\[
E=\left\{ \left\{ x,y\right\} :x,y\in\mathbb{Z}^{N},\mathop{\sum\limits _{i=1}^{N}\lvert x_{i}-y_{i}\rvert=1}\right\} .
\]
We denote by $\ell^{p}(\mathbb{Z}^{N})$ the $\ell^{p}$-summable
functions on $\mathbb{Z}^{N}$ and by $D^{1,p}(\mathbb{Z}^{N})$ the
completion of finitely supported functions in the $D^{1,p}$ norm,
see Section 2 for details. The discrete Sobolev inequality (\ref{eq:discrete sobo1})
and HLS inequality (\ref{eq:dhls-1}) in $\mathbb{Z}^{N}$ are well-known,
see \cite[Theorem 3.6]{HM} and \cite{HLY} for proofs: 
\begin{equation}
\lVert u\rVert_{\ell^{p^{\ast}}}\leq C_{p}\lVert u\rVert_{D^{1,p}},\;\forall u\in D^{1,p}(\mathbb{Z}^{N}),\label{eq:discrete sobo1}
\end{equation}
where $N\geq3,1\leq p<N,p^{\ast}=\dfrac{Np}{N-p}$; 
\begin{equation}
\sum_{\substack{i,j\in\mathbb{Z}^{N}\\
i\neq j
}
}\frac{f(i)g(j)}{\mid i-j\mid^{\lambda}}\leq C_{r,s,\lambda}\Vert f\Vert_{\ell^{r}}\Vert g\Vert_{\ell^{s}},\:\forall f\in\ell^{r}(\mathbb{Z}^{N}),g\in\ell^{s}(\mathbb{Z}^{N}),\label{eq:dhls-1}
\end{equation}
where $r,s>1,$ $0<\lambda<N$, $\frac{1}{r}+\frac{1}{s}+\frac{\lambda}{N}=2$.

Since $\ell^{p}(\mathbb{Z}^{N})$ embeds into $\ell^{q}(\mathbb{Z}^{N})$
for any $q>p$, see Lemma \ref{sobo2}, one verifies that the discrete
Sobolev inequality (\ref{eq:discrete sobo1}) and HLS inequality (\ref{eq:dhls-1})
hold when $q\geq p^{\ast}$ and $\frac{1}{r}+\frac{1}{s}+\frac{\lambda}{N}\geq2$
respectively. Recalling the continuous setting, it is called subcritical
for $q<p^{\ast}$ (resp. $\frac{1}{r}+\frac{1}{s}+\frac{\lambda}{N}<2$),
critical for $q=p^{\ast}$ (resp. $\frac{1}{r}+\frac{1}{s}+\frac{\lambda}{N}=2$),
and supercritical for $q>p^{\ast}$ (resp. $\frac{1}{r}+\frac{1}{s}+\frac{\lambda}{N}>2$)
for the Sobolev inequality (resp. HLS inequality). Therefore, (\ref{eq:discrete sobo1})
and (\ref{eq:dhls-1}) hold in both critical and supercritical cases
on $\mathbb{Z}^{N}$.

The optimal constant in the Sobolev inequality (\ref{eq:sobo1}) is
given by 
\begin{equation}
S:=\inf_{\substack{u\in D^{1,p}(\mathbb{Z}^{N})\\
\lVert u\rVert_{q}=1
}
}\lVert u\rVert_{D^{1,p}}^{p}.\label{eq:inf}
\end{equation}
In order to prove that the infimum is achieved, we consider a minimizing
sequence $\{u_{n}\}\subset D^{1,p}(\mathbb{Z}^{N})$ satisfying 
\begin{equation}
\lVert u_{n}\rVert_{q}=1,\lVert u_{n}\rVert_{D^{1,p}}^{p}\longrightarrow S,n\longrightarrow\infty.\label{eq:min seq}
\end{equation}
We want to prove $u_{n}\longrightarrow u$ strongly in $D^{1,p}(\mathbb{Z}^{N})$,
which will imply $u$ is a minimizer.

For the discrete HLS inequality, we first consider the following equivalent
form of (\ref{eq:hls1}),

\begin{equation}
\lVert\mid i\mid^{-\lambda}\ast f\rVert_{t}\leq C_{r,t}\lVert f\rVert_{r},\label{eq:hls2}
\end{equation}
where $\mid i\mid^{-\lambda}\ast f\coloneqq\underset{j\neq i}{\Sigma}\frac{f(j)}{\mid i-j\mid^{\lambda}}$,
$r,r^{\ast},t>1,0<\lambda<N,\frac{1}{r^{\ast}}+\frac{\lambda}{N}=1+\frac{1}{t},r<r^{\ast}.$

The optimal constant in the inequality (\ref{eq:hls2}) is given by
\[
K\coloneqq\underset{\Vert f\Vert_{r}=1}{\sup}\lVert\mid i\mid^{-\lambda}\ast f\rVert_{t}.
\]
We consider a maximizing sequence $\{f_{n}\}$ satisfying 
\begin{equation}
\lVert f_{n}\rVert_{r}=1,\lVert\mid i\mid^{-\lambda}\ast f_{n}\rVert_{t}\longrightarrow K,n\longrightarrow\infty.\label{eq:max squ}
\end{equation}
We want to prove $f_{n}\longrightarrow f$ strongly in $\ell^{r}(\mathbb{Z}^{N})$,
and hence $f$ is a maximizer. Then by Lemma \ref{lem:exist g}, for
$\frac{1}{t}+\frac{1}{s}=1$ there exists $g\in\ell^{s}(\mathbb{Z}^{N})$
with $\lVert g\rVert_{s}=1$ such that $(f,g)$ is a maximizing pair
for (\ref{eq:hls1}).

We prove the following main results. 
\begin{thm}
For $N\geq3,1\leq p<N$, $q>p^{\ast}=\frac{Np}{N-p}$, \label{thm:main1}let
$\left\{ u_{n}\right\} \subset D^{1,p}(\mathbb{Z}^{N})$ be a minimizing
sequence satisfying (\ref{eq:min seq}). Then there exists a sequence
$\{i_{n}\}\subset\mathbb{Z}^{N}$ such that the sequence after translation
$\left\{ v_{n}(i):=u_{n}(i+i_{n})\right\} $ contains a convergent
subsequence that converges to $v$ in $D^{1,p}(\mathbb{Z}^{N})$.
And $v$ is a minimizer for $S$. 
\end{thm}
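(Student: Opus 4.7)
The plan is to execute a discrete Concentration--Compactness argument tailored to the supercritical exponent $q > p^{\ast}$. I first observe that (\ref{eq:min seq}) makes $\{u_{n}\}$ bounded in $D^{1,p}(\mathbb{Z}^{N})$, and hence bounded in $\ell^{p^{\ast}}(\mathbb{Z}^{N})$ by the critical Sobolev embedding (\ref{eq:discrete sobo1}). The goal is to pass from this uniform bound to pointwise convergence of a translated subsequence, and then upgrade it to strong convergence using a Br\'ezis--Lieb splitting together with a strict concavity trick.

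The crucial step is to rule out vanishing. For $q > p^{\ast}$ one has the elementary discrete interpolation
\[
1 = \|u_{n}\|_{q}^{q} = \sum_{i \in \mathbb{Z}^{N}} |u_{n}(i)|^{p^{\ast}} |u_{n}(i)|^{q-p^{\ast}} \leq \|u_{n}\|_{\ell^{p^{\ast}}}^{p^{\ast}}\,\|u_{n}\|_{\ell^{\infty}}^{q-p^{\ast}}.
\]
Since $\|u_{n}\|_{\ell^{p^{\ast}}}$ is bounded, this forces $\|u_{n}\|_{\ell^{\infty}} \geq \delta > 0$ uniformly in $n$. Choose $i_{n} \in \mathbb{Z}^{N}$ with $|u_{n}(i_{n})| \geq \delta/2$ and set $v_{n}(i) := u_{n}(i+i_{n})$. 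By translation invariance, $\{v_{n}\}$ remains a minimizing sequence, and now $|v_{n}(0)| \geq \delta/2$. Since $\{v_{n}\}$ is bounded in $\ell^{p^{\ast}}(\mathbb{Z}^{N})$ and $\mathbb{Z}^{N}$ is countable, a Cantor diagonal extraction gives a subsequence converging pointwise to some $v : \mathbb{Z}^{N} \to \mathbb{R}$ with $|v(0)| \geq \delta/2$, so $v \not\equiv 0$.

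Next I invoke a discrete Br\'ezis--Lieb type identity, which in this setting follows directly from pointwise convergence plus the $\ell^{p^{\ast}}$ and $D^{1,p}$ bounds, to get
\[
\|v_{n}\|_{q}^{q} = \|v\|_{q}^{q} + \|v_{n}-v\|_{q}^{q} + o(1), \qquad \|v_{n}\|_{D^{1,p}}^{p} = \|v\|_{D^{1,p}}^{p} + \|v_{n}-v\|_{D^{1,p}}^{p} + o(1).
\]
Writing $a := \|v\|_{q}^{q} \in (0,1]$ and applying (\ref{eq:sobo1}) separately to $v$ and to $v_{n}-v$, one obtains
\[
S\,a^{p/q} + S\,(1-a)^{p/q} \leq \|v\|_{D^{1,p}}^{p} + \liminf_{n} \|v_{n}-v\|_{D^{1,p}}^{p} = \lim_{n} \|v_{n}\|_{D^{1,p}}^{p} = S.
\]
Since $p/q < 1$, the map $t \mapsto t^{p/q}$ is strictly concave on $[0,1]$ with $0 \mapsto 0$ and $1 \mapsto 1$, so $a^{p/q} + (1-a)^{p/q} \geq 1$ with equality only at $a \in \{0,1\}$. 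Because $a > 0$, this forces $a = 1$, giving both $\|v\|_{q} = 1$ and $\|v_{n}-v\|_{q} \to 0$; the chain of inequalities then collapses, yielding $\|v\|_{D^{1,p}}^{p} = S$ and, via the Br\'ezis--Lieb splitting of the $D^{1,p}$ norm, $\|v_{n}-v\|_{D^{1,p}} \to 0$.

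The step I expect to be the main obstacle is the first one, ruling out vanishing, because it is where the supercritical hypothesis $q > p^{\ast}$ enters in an essential way: the interpolation exponent $q - p^{\ast}$ must be strictly positive to conclude a uniform $\ell^{\infty}$ lower bound from a fixed $\ell^{q}$ mass. The rest of the argument, once a nonzero pointwise limit is produced, is a fairly standard concavity/splitting computation; the dichotomy case of the classical Concentration--Compactness principle is absorbed directly by the strict concavity of $t^{p/q}$ and does not need to be analyzed as a separate alternative. Some minor technical care is required in the case $p = 1$, where reflexivity fails and pointwise convergence has to play the role of weak convergence, but both the interpolation and the Br\'ezis--Lieb identity remain valid.
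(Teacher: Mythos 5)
Your proposal is correct and follows essentially the same route as the paper: the non-vanishing step via interpolation between $\ell^{p^{\ast}}$ (the paper uses an intermediate $q'$ with $p^{\ast}<q'<q$, but the idea is identical) and $\ell^{\infty}$ is the paper's Lemma \ref{lem:lower bound}, and the subsequent Br\'ezis--Lieb splitting of both norms combined with the strict subadditivity of $t\mapsto t^{p/q}$ is exactly the paper's Proof II (the paper's Corollary \ref{cor:Corollary-1.} supplies the gradient-norm splitting you assert by viewing $\nabla u$ as an $\ell^{p}$ function on the edge set). No gaps; the argument is sound.
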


\begin{rem*}
(1) We prove the case $p=1$ on $\mathbb{Z}^{N}$, while it is not
true in the continuous case. In our case, $\left\{ \lvert\nabla u_{n}\rvert_{1}\right\} $
contains a $w^{\ast}$-convergent subsequence in $\ell^{1}(\mathbb{Z}^{N})$
by the discrete nature. This fails for the continuous case since $L^{1}(\mathbb{R}^{N})$
is not a dual space of any normed linear space.

(2) The best constant can be obtained in the supercritical case. 
\end{rem*}
Let $(\Gamma,S)$ be a Cayley graph of a discrete group $G$ with
a finite generating set $S.$ In particular, $\mathbb{Z}^{N}$ is
a Cayley graph of a free abliean group. For any $r\in\mathbb{N},$
we denote by $V(r)$ the number of group elements with word length
at most $r.$ For a Cayley graph it is well known that if $V(r)\geq Cr^{D},\ \forall r\geq1$
for $D\geq3,$ then the Sobolev inequality holds, 
\begin{equation}
{\displaystyle \lVert u\rVert_{\ell^{q}}\leq C_{p,q}\lVert u\rVert_{D^{1,p}},}\label{eq:sobo10}
\end{equation}
where $1\leq p<D,q\geq\dfrac{Dp}{D-p}.$ In fact, this follows from
a standard trick and the isoperimetric estimate \cite[Theorem 4.18]{W3}.
By the same argument, we can prove the following result. 
\begin{thm}
\label{thm:Cayley}Let $(\Gamma,S)$ be a Cayley graph satisfying
$V(r)\geq Cr^{D},\ \forall r\geq1$ for $D\geq3.$ For $1\leq p<D$,
$q>\frac{Dp}{D-p}.$ Let $\left\{ u_{n}\right\} \subset D^{1,p}$
be a minimizing sequence in \eqref{eq:sobo10} with $\|u_{n}\|_{q}=1.$
Then there exists a sequence $\{g_{n}\}\subset\Gamma$ such that the
sequence after translation $\{v_{n}\}$ with $v_{n}(g):=u_{n}(g_{n}g),g\in\Gamma,$
contains a convergent subsequence that converges to $v$ in $D^{1,p}$
and $v$ is a minimizer for $S$. 
\end{thm}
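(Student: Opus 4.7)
The plan is to mirror the proof of Theorem~\ref{thm:main1}, replacing $\mathbb{Z}^N$-translations by the left action of $\Gamma$ on itself. Since $(\Gamma,S)$ is vertex-transitive under left multiplication, the functionals $\|\cdot\|_{\ell^q}$ and $\|\cdot\|_{D^{1,p}}$, and hence the infimum, are invariant under $u\mapsto u(g_0\,\cdot\,)$ for every $g_0\in\Gamma$; this invariance supplies the recentering freedom needed to compensate for the translational degeneracy of minimizing sequences.

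I would first set up the discrete Concentration--Compactness trichotomy on $\Gamma$. Given a minimizing sequence $\{u_n\}$ with $\|u_n\|_q=1$ and $\|u_n\|_{D^{1,p}}^p\to S$, view $\mu_n:=|u_n|^q$ as probability measures on the countable set $\Gamma$. Along a subsequence, exactly one of three alternatives occurs: \emph{compactness}, meaning there exist $\{g_n\}\subset\Gamma$ such that for every $\varepsilon>0$ some ball $B(g_n,R)$ carries mass at least $1-\varepsilon$ uniformly in $n$; \emph{vanishing}, meaning $\sup_{g\in\Gamma}\sum_{h\in B(g,R)}\mu_n(h)\to 0$ for every fixed $R$; or \emph{dichotomy}, meaning $\mu_n$ splits asymptotically into two pieces at divergent mutual distance. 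The $\mathbb{Z}^N$-proof of the trichotomy uses only countability of the vertex set, vertex-transitivity, and the polynomial volume bound $V(r)\geq Cr^D$, so it transfers to $\Gamma$ without change.

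Next I would rule out vanishing and dichotomy. Vanishing is incompatible with $\|u_n\|_q=1$: uniform smallness of mass on every ball, combined with the embedding $\ell^p(B)\hookrightarrow\ell^q(B)$ on finite balls (the Cayley-graph analogue of Lemma~\ref{sobo2}) and the polynomial growth of balls, forces $\|u_n\|_q\to 0$, a contradiction. Dichotomy is ruled out by the strict subadditivity inherited from the scaling $u\mapsto\lambda u$, which yields $S(\alpha):=\inf\{\|u\|_{D^{1,p}}^p:\|u\|_q=\alpha\}=\alpha^{p}S$; a hypothetical splitting with $\ell^q$-masses $\alpha^{1/q}$ and $(1-\alpha)^{1/q}$ would then force $\liminf\|u_n\|_{D^{1,p}}^p\geq\alpha^{p/q}S+(1-\alpha)^{p/q}S>S$ for every $\alpha\in(0,1)$, because $p<q$ in the supercritical regime.

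Only the compactness alternative remains. Tightness of the translated sequence $v_n(g):=u_n(g_n g)$ in $\ell^q$, together with boundedness in $D^{1,p}$, yields a weakly convergent subsequence $v_n\rightharpoonup v$ (for $p=1$ this is replaced by weak-$\ast$ convergence of $|\nabla v_n|$ in $\ell^1$ as in Remark~(1)). Tightness upgrades weak to strong convergence in $\ell^q$, so $\|v\|_q=1$, and lower semicontinuity gives $\|v\|_{D^{1,p}}^p\leq S$, so $v$ is a minimizer; a Br\'ezis--Lieb decomposition then promotes $v_n\rightharpoonup v$ to strong convergence in $D^{1,p}$. The main obstacle is verifying that the trichotomy and the Br\'ezis--Lieb step, developed for $\mathbb{Z}^N$, transfer to an arbitrary vertex-transitive Cayley graph of polynomial growth $D\geq 3$; this reduces to checking that the proofs exploit only vertex-transitivity and the bound $V(r)\geq Cr^D$, which appears to be the case.
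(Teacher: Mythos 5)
Your route is genuinely different from the paper's and, as written, has a real gap at its crucial step. The paper does not prove a Lions trichotomy anywhere: Lemma \ref{lem:Concentration-Compact} is only a decomposition of mass into a local part and a part at infinity, not the compactness/vanishing/dichotomy alternative. So the trichotomy you propose to ``transfer'' from $\mathbb{Z}^N$ does not exist in the source and would have to be proved from scratch on $\Gamma$, including the cutoff splitting in the dichotomy case with small gradient interaction across the transition annulus --- that is exactly the technical machinery the paper's argument is designed to avoid. More seriously, your exclusion of vanishing is where all the content lies, and the mechanism you describe does not work: ``uniform smallness of mass on every ball, combined with $\ell^p(B)\hookrightarrow\ell^q(B)$ on finite balls and polynomial growth, forces $\lVert u_n\rVert_q\to 0$'' is not an argument (note $\lVert u_n\rVert_q\equiv 1$ by normalization, and norm monotonicity on balls points in the wrong direction), and it nowhere invokes the strict supercriticality $q>\frac{Dp}{D-p}$, without which vanishing genuinely cannot be excluded. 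The correct mechanism, isolated in the paper as Lemma \ref{lem:lower bound}, is: on a graph, vanishing at radius $0$ already means $\lVert u_n\rVert_\infty\to 0$, and then for any $q'$ with $\frac{Dp}{D-p}\leq q'<q$ the interpolation
\[
1=\lVert u_n\rVert_q^q\leq\lVert u_n\rVert_{q'}^{q'}\lVert u_n\rVert_\infty^{q-q'}\leq C^{q'}\lVert u_n\rVert_{D^{1,p}}^{q'}\lVert u_n\rVert_\infty^{q-q'}\longrightarrow 0
\]
gives a contradiction, using that the Sobolev inequality \eqref{eq:sobo10} on $(\Gamma,S)$ holds for all exponents at and above $\frac{Dp}{D-p}$.

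Once you have $\varliminf_{n\to\infty}\lVert u_n\rVert_\infty>0$, the entire trichotomy is unnecessary, and this is precisely the paper's proof: pick $g_n$ realizing the maximum of $|u_n|$ (attainable since $u_n\in\ell^q$), set $v_n(g):=u_n(g_ng)$, extract a pointwise limit $v$ with $|v(e)|>0$, and run Proof I of Theorem \ref{thm:main1} verbatim --- the local/at-infinity decomposition plus the strict concavity $(a^q+b^q)^{p/q}<a^p+b^p$ for $a,b>0$ forces $\nu_\infty=0$ and $\lVert v\rVert_q=1$. Everything used there (the $w^\ast$-compactness of bounded sets in $\ell^1(\Gamma)=(c_0(\Gamma))^\ast$, the Br\'ezis--Lieb lemma on the countable edge set, translation invariance of the norms under left multiplication) transfers to a vertex-transitive Cayley graph of polynomial growth without change. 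Your dichotomy computation via $S(\alpha)=\alpha^pS$ is the same strict subadditivity in disguise, so your plan can be repaired, but you should replace the vanishing step by the interpolation argument above and either prove the trichotomy in full or dispense with it as the paper does.
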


Similarly, we have the following theorem for the discrete HLS inequality. 
\begin{thm}
\label{thm:main2}For $r,s,t>1,0<\lambda<N,\frac{1}{t}+\frac{1}{s}=1,\frac{1}{r}+\frac{\lambda}{N}>1+\frac{1}{t}.$
Let $\left\{ f_{n}\right\} \subset\ell^{r}(\mathbb{Z}^{N})$ be a
maximizing sequence satisfying (\ref{eq:max squ}). Then there exists
a sequence $\{i_{n}\}\subset\mathbb{Z}^{N}$ such that the sequence
after translation $\left\{ v_{n}(i):=f_{n}(i+i_{n})\right\} $ contains
a convergent subsequence that converges to $f$ in $\ell^{r}(\mathbb{Z}^{N})$.
And $f$ is a maximizer for $K$. Moreover, there exists $g\in\ell^{s}(\mathbb{Z}^{N})$
with $\lVert g\rVert_{s}=1$ such that $(f,g)$ is a maximizing pair
for (\ref{eq:hls1}) in the supercritical case $\frac{1}{r}+\frac{1}{s}+\frac{\lambda}{N}>2$. 
\end{thm}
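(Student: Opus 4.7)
The plan is to parallel the strategy of Theorem~\ref{thm:main1}, applying a discrete Concentration--Compactness principle to the probability measures $\mu_{n}:=\lvert f_{n}\rvert^{r}$ on $\mathbb{Z}^{N}$. Along a subsequence this yields one of three alternatives: compactness (up to translation), vanishing, or dichotomy. Once the latter two are excluded, compactness provides translates $\{i_{n}\}\subset\mathbb{Z}^{N}$ so that $v_{n}(i):=f_{n}(i+i_{n})$ is $\ell^{r}$-tight; combining tightness with a weak $\ell^{r}$-limit (extracted by reflexivity, since $r>1$) upgrades to strong convergence $v_{n}\to f$ in $\ell^{r}(\mathbb{Z}^{N})$. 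Continuity of the map $f\mapsto\lvert i\rvert^{-\lambda}*f$ as a bounded operator $\ell^{r}\to\ell^{t}$ then gives $\lVert\lvert i\rvert^{-\lambda}*f\rVert_{t}=K$, so $f$ is a maximizer for (\ref{eq:hls2}). The companion function is produced by Lemma~\ref{lem:exist g}: duality with the conjugate exponent $s$ of $t$ delivers $g\in\ell^{s}(\mathbb{Z}^{N})$ with $\lVert g\rVert_{s}=1$ such that $(f,g)$ realizes equality in (\ref{eq:hls1}).

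To exclude vanishing, I would use the embedding $\ell^{r}\hookrightarrow\ell^{r^{*}}$ valid for $r<r^{*}$. Vanishing forces $\sup_{i}\lvert f_{n}(i)\rvert\to 0$, and the elementary interpolation $\lVert f_{n}\rVert_{r^{*}}^{r^{*}}\leq\lVert f_{n}\rVert_{\infty}^{r^{*}-r}\lVert f_{n}\rVert_{r}^{r}$ then yields $\lVert f_{n}\rVert_{r^{*}}\to 0$. The critical discrete HLS (\ref{eq:dhls-1}), rewritten as $\lVert\lvert i\rvert^{-\lambda}*f_{n}\rVert_{t}\leq C\lVert f_{n}\rVert_{r^{*}}$, would then force $\lVert\lvert i\rvert^{-\lambda}*f_{n}\rVert_{t}\to 0$, contradicting convergence to $K>0$. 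To exclude dichotomy I would exploit strict superadditivity. The scaling identity $K(\alpha):=\sup_{\lVert f\rVert_{r}^{r}=\alpha}\lVert\lvert i\rvert^{-\lambda}*f\rVert_{t}=\alpha^{1/r}K$ gives $K(\alpha)^{t}+K(1-\alpha)^{t}=(\alpha^{t/r}+(1-\alpha)^{t/r})\,K^{t}$. The supercritical hypothesis $\frac{1}{r}+\frac{1}{s}+\frac{\lambda}{N}>2$, combined with $\frac{1}{s}+\frac{1}{t}=1$ and $\lambda<N$, rearranges to $\frac{1}{r}-\frac{1}{t}>1-\frac{\lambda}{N}>0$, hence $r<t$, so $t/r>1$ and $\alpha^{t/r}+(1-\alpha)^{t/r}<1$ for $\alpha\in(0,1)$. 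This contradicts the dichotomy bound $K^{t}\leq K(\alpha)^{t}+K(1-\alpha)^{t}$.

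The principal obstacle will be the dichotomy step, because the nonlocal kernel $\lvert i\rvert^{-\lambda}$ decays slowly and does not split cleanly under cutoffs. To justify that two pieces $f_{n}^{(1)},f_{n}^{(2)}$ supported in regions separated by a distance $R_{n}\to\infty$ contribute essentially additively to $\lVert\lvert i\rvert^{-\lambda}*f_{n}\rVert_{t}^{t}$, I would control the cross convolution $\lvert i\rvert^{-\lambda}*f_{n}^{(2)}$ on the support of $f_{n}^{(1)}$ using H\"older together with the tail decay of $\lvert i\rvert^{-\lambda}$, showing that cross-terms vanish as $R_{n}\to\infty$. This discrete analogue of Lions' argument closes the loop, delivering strong $\ell^{r}$-convergence of the translated maximizing sequence, after which Lemma~\ref{lem:exist g} completes the construction of the maximizing pair $(f,g)$.
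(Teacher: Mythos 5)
Your strategy is genuinely different from the paper's. The paper does not run the full Lions trichotomy for the HLS functional at all: it excludes vanishing exactly as you do (Corollary \ref{cor:lower bound2}, via the interpolation $\lVert f_{n}\rVert_{r'}^{r'}\leq\lVert f_{n}\rVert_{r}^{r}\lVert f_{n}\rVert_{\infty}^{r'-r}$ with $r<r'<r^{*}$ and the HLS bound), translates so that the $\ell^{\infty}$-maximum sits at the origin, extracts a pointwise limit $f\not\equiv0$, and then invokes Lieb's compactness lemma (Lemma \ref{lem:proof2}, i.e.\ \cite[Lemma 2.7]{L5}) with $A=\lvert i\rvert^{-\lambda}\ast:\ell^{r}\to\ell^{t}$. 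The hypothesis $r<t$ of that lemma --- which you correctly derive from $\frac{1}{r}+\frac{\lambda}{N}>1+\frac{1}{t}$ and $\lambda<N$ --- is precisely what plays the role of your strict superadditivity $\alpha^{t/r}+(1-\alpha)^{t/r}<1$, and it delivers $\lVert f\rVert_{r}=1$ together with strong $\ell^{r}$-convergence via the Br\'ezis--Lieb lemma, so the entire dichotomy analysis is bypassed. The final duality step via Lemma \ref{lem:exist g} is the same in both arguments.

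The one place where your plan has a real gap is the dichotomy exclusion itself, which you correctly identify as the principal obstacle but for which the estimate you propose is not the right one. Smallness of the cross convolution $\lvert i\rvert^{-\lambda}\ast f_{n}^{(2)}$ on the support of $f_{n}^{(1)}$ does not give the additivity $\lVert g_{1}+g_{2}\rVert_{t}^{t}=\lVert g_{1}\rVert_{t}^{t}+\lVert g_{2}\rVert_{t}^{t}+o(1)$ for $g_{k}:=\lvert i\rvert^{-\lambda}\ast f_{n}^{(k)}$, because $g_{1}$ and $g_{2}$ are globally supported and the $t$-th power of an $\ell^{t}$ norm only splits when the summands are almost disjointly supported \emph{in $\ell^{t}$}; the triangle inequality alone gives the wrong (too weak) direction. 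What is actually needed is an $\ell^{t}$-localization statement: $\lVert g_{k}\,\chi_{\{d(\cdot,\mathrm{supp}f_{n}^{(k)})>R_{n}/2\}}\rVert_{t}\to0$. This can be obtained by truncating the kernel, $\lvert i\rvert^{-\lambda}\chi_{\{\lvert i\rvert>R\}}\leq R^{-(\lambda-\lambda')}\lvert i\rvert^{-\lambda'}$, and applying the HLS bound with $\lambda'<\lambda$, which is admissible only because the supercritical inequality $\frac{1}{r}+\frac{\lambda}{N}>1+\frac{1}{t}$ is strict and leaves room to lower $\lambda$. With that repair your argument closes, but as written the key cross-term step would not go through; and since non-vanishing plus Lieb's lemma already yields the conclusion, the dichotomy machinery is dispensable here.
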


\begin{rem*}
(1) Unlike Lieb's proof in \cite{L5}, our proof does not depend on
the rearrangement trick and the special properties of $I(i)=\mid i\mid^{-\lambda}$,
namely $I(i)$ is spherically symmetric and decreasing. This enables
us to treat general classes of potentials $I(i)$.

(2) The best constant can be obtained in the supercritical case. This
has been proved by \cite{HLY}. Here we give an alternative proof. 
\end{rem*}
We will provide two proofs for the main results. In the continuous
setting, Lions proved the existence of extremal functions by Concentration-Compactness
principle \cite[Lemma I.1.]{L3} and a rescaling trick \cite[Theorem I.1, (17)]{L3}.
And Lieb in \cite{L5} used a compactness technique and the rearrangement
inequalities. Following Lions, the main idea of proof I is to prove
a discrete analog of Concentration-Compactness principle, see Lemma
\ref{lem:Concentration-Compact}. However, we don't know proper notion
of rescaling and rearrangement tricks on $\mathbb{Z}^{N}$ to exclude
the vanishing case of the limit function. Inspired by \cite{HLY},
for the supercritical case, we prove that the translation sequence
has a uniform positive lower bound at the origin, see Lemma \ref{lem:lower bound},
which excludes the vanishing case. The idea of proof II is based on
a compactness technique by Lieb \cite[Lemma 2.7]{L5} and the nontrivial
nonvanishing of the limit of translation sequence.

According to \cite{HW,C3}, we define the $p$-Laplaican of $u$ for
$p>1$, 
\[
\Delta_{p}u(x)\coloneqq\underset{y\sim x}{\sum}\lvert u(y)-u(x)\rvert^{p-2}\left(u(y)-u(x)\right),
\]
and for $p=1$,
\[
\Delta_{1}u(x)\coloneqq\left\{ \sum\limits _{y\sim x}f_{xy}:f_{xy}=-f_{yx},f_{xy}\in\textrm{sign}(u(y)-u(x))\right\} \text{, sign\ensuremath{\left(x\right)}=\ensuremath{\begin{cases}
\begin{array}{c}
1,\\{}
[-1,1]\\
-1,
\end{array}, & \begin{array}{c}
x>0,\\
x=0,\\
x<0.
\end{array}\end{cases}}}
\]
Similar to the continuous setting \cite{A,T1}, we have the following
corollary. 
\begin{cor}
\label{cor:p-Laplaican}For $N\geq3,1\leq p<N$, $q>p^{\ast}$, there
is a positive solution of the equation 
\begin{equation}
\Delta_{p}u+u^{q-1}=0,\;x\in\mathbb{Z}^{N}.\label{eq:p-Laplaican}
\end{equation}
\end{cor}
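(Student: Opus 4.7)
The plan is to realize (\ref{eq:p-Laplaican}) as the Euler-Lagrange equation of the Sobolev quotient and harvest the minimizer supplied by Theorem \ref{thm:main1}. First I would pick a minimizer $v\in D^{1,p}(\mathbb{Z}^{N})$ for $S$ with $\|v\|_{q}=1$ and $\|v\|_{D^{1,p}}^{p}=S>0$. Since $\bigl\lvert\lvert v(x)\rvert-\lvert v(y)\rvert\bigr\rvert\leq\lvert v(x)-v(y)\rvert$ on every edge, the replacement $v\mapsto|v|$ preserves the constraint while not increasing the $D^{1,p}$-seminorm, so I may assume $v\geq 0$ throughout.

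For $p>1$, both $\|\cdot\|_{D^{1,p}}^{p}$ and $\|\cdot\|_{q}^{q}$ are $C^{1}$ on $D^{1,p}(\mathbb{Z}^{N})$; varying the Sobolev quotient against a finitely supported test function $\varphi$ and invoking the discrete Green identity (which produces precisely the $p$-Laplacian of \cite{HW,C3}) yields the pointwise Euler-Lagrange equation $-\Delta_{p}v=\lambda\,v^{q-1}$ on $\mathbb{Z}^{N}$, with Lagrange multiplier $\lambda=\|v\|_{D^{1,p}}^{p}/\|v\|_{q}^{q}=S>0$ obtained by testing against $v$ itself. The rescaling $u:=\lambda^{1/(q-p)}v$, combined with the homogeneity $\Delta_{p}(cv)=c^{p-1}\Delta_{p}v$ for $c>0$, then converts this identity into (\ref{eq:p-Laplaican}). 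In the case $p=1$, the first-variation step must be carried out through the subdifferential of the convex $D^{1,1}$-seminorm; the Lagrange multiplier rule produces antisymmetric edge flows $f_{xy}\in\textrm{sign}(u(y)-u(x))$ satisfying $\sum_{y\sim x}f_{xy}+u(x)^{q-1}=0$, which is the required inclusion interpretation of (\ref{eq:p-Laplaican}) consistent with the set-valued definition of $\Delta_{1}$.

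Finally I would verify pointwise positivity of $u$ on $\mathbb{Z}^{N}$, and this is the step I expect to be the most delicate. For $p>1$ a discrete strong maximum principle argument applies directly: if $u(x_{0})=0$ at some vertex, then evaluating (\ref{eq:p-Laplaican}) at $x_{0}$ forces $\sum_{y\sim x_{0}}u(y)^{p-1}=u(x_{0})^{q-1}=0$, and since $u\geq 0$ every neighbour of $x_{0}$ must also vanish; connectedness of $\mathbb{Z}^{N}$ then propagates $u\equiv 0$, contradicting $\|u\|_{q}>0$. The main obstacle is the case $p=1$, where the set-valuedness of $\Delta_{1}$ allows zeros to be balanced by neighbouring flows $f_{x_{0}y}\in[-1,1]$ taken at vertices where $u$ vanishes, and where the continuous analogue (the characteristic function of a ball) is not strictly positive; here ``positive'' should be read as non-negative and non-trivial, which is automatic from the construction.
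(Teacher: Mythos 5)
Your proposal is correct and follows essentially the same route as the paper: take the minimizer from Theorem \ref{thm:main1}, pass to its absolute value, derive the Euler--Lagrange equation via the Lagrange multiplier rule (with the rescaling $u=\lambda^{1/(q-p)}v$ absorbing the multiplier), and conclude positivity by the maximum principle. You are in fact more careful than the paper's one-line proof, notably in spelling out the subdifferential formulation and the maximum-principle subtlety in the case $p=1$, which the paper glosses over.
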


By observing that any non-negative solution to (\ref{eq:p-Laplaican})
is bounded, see Lemma \ref{lem:coro1}, we prove the following theorem
using the results in Lin and Wu's papers \cite{LW,W2}. 
\begin{thm}
\label{thm:nonexist sol}For $N\geq3,$ $p=2$, $2<q\leq\frac{2+2N}{N}$,
there does not exist a non-trivial non-negative solution for (\ref{eq:p-Laplaican}). 
\end{thm}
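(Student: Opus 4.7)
The plan is to deduce Theorem \ref{thm:nonexist sol} from the discrete Liouville-type nonexistence results of Lin and Wu in \cite{LW, W2}. For $p = 2$, equation (\ref{eq:p-Laplaican}) reads $\Delta u + u^{q-1} = 0$, where $\Delta = \Delta_{2}$ is the standard graph Laplacian on $\mathbb{Z}^{N}$. Setting $\alpha := q-1$, the hypothesis $2 < q \leq \frac{2+2N}{N}$ becomes $1 < \alpha \leq 1 + \frac{2}{N}$, which is precisely the Fujita sub-critical-to-critical range on $\mathbb{Z}^{N}$ (note that $1+\frac{2}{N}$ lies strictly below the Sobolev exponent $p^{\ast}-1 = \frac{N+2}{N-2}$ covered by Corollary \ref{cor:p-Laplaican}, so there is no conflict).

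The first step is to establish the boundedness claim stated as Lemma \ref{lem:coro1}. At any vertex $x$, using $\Delta u(x) = \sum_{y \sim x} u(y) - 2N u(x)$ and the non-negativity of $u$,
\[
u(x)^{q-1} = -\Delta u(x) = 2N u(x) - \sum_{y \sim x} u(y) \leq 2N u(x),
\]
so $u(x) \leq (2N)^{\frac{1}{q-2}}$ pointwise (recall $q > 2$), giving $u \in \ell^{\infty}(\mathbb{Z}^{N})$. With this uniform bound in hand, I would then invoke the Liouville theorem of \cite{LW, W2}: any bounded non-negative solution of $\Delta u + u^{\alpha} = 0$ on $\mathbb{Z}^{N}$ with $1 < \alpha \leq 1 + \frac{2}{N}$ vanishes identically. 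Applying this to our $u$ immediately forces $u \equiv 0$, contradicting the assumed non-triviality.

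The bulk of the proof is therefore a matching step rather than new analytic work, and the main obstacle is precisely this matching: one must verify that the statements available in \cite{LW, W2} cover the full exponent range $1 < \alpha \leq 1 + \frac{2}{N}$, use the same sign convention and normalization for the Laplacian adopted here, and apply to solutions on all of $\mathbb{Z}^{N}$ (rather than, say, on exterior domains or with prescribed decay). If the critical endpoint $\alpha = 1 + \frac{2}{N}$ is not explicitly included in their formulation, I would recover it either by a limiting argument using solutions with slightly smaller exponents together with the uniform bound $(2N)^{1/(q-2)}$ and a standard diagonal extraction, or by adapting the test-function/integration argument of Lin-Wu at the endpoint.
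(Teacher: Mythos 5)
Your proposal follows essentially the same route as the paper: prove pointwise boundedness of any non-negative solution directly from the equation (this is the role of Lemma \ref{lem:coro1}; your computation with the unnormalized Laplacian on $\mathbb{Z}^{N}$, giving $u(x)\leq(2N)^{1/(q-2)}$, is a correct and in fact more directly applicable variant, since the paper's lemma is stated for the normalized Laplacian), and then conclude by citing Lin and Wu. The one substantive point of divergence is what you attribute to \cite{LW,W2}: you invoke an \emph{elliptic} Liouville theorem for bounded non-negative solutions of $\Delta u+u^{\alpha}=0$, whereas the results of Lin and Wu concern the semilinear \emph{heat} equation and assert finite-time blow-up (Fujita-type) of every non-trivial non-negative solution in the range $0<N(q-2)<2$. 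The paper bridges this gap by observing that $v(t,x):=u(x)$ is a global-in-time, bounded, non-negative solution of $v_{t}=\Delta_{2}v+v^{q-1}$ with initial data $u$, which contradicts finite-time blow-up; this one-line reduction is exactly what your write-up is missing, and without it your citation points to a statement that is not literally in \cite{LW,W2}. Separately, your fallback for the critical endpoint $q=\frac{2N+2}{N}$ via ``a limiting argument using solutions with slightly smaller exponents'' is not viable: the solution $u$ satisfies the equation with one fixed exponent, and there is no family of solutions with varying exponents over which to pass to the limit. Whether the endpoint is covered must be read off directly from the statements in \cite{LW,W2}; note that the paper's own proof quotes only the strict condition $0<N(q-2)<2$, so the endpoint indeed deserves the scrutiny you give it, but the remedy would have to come from the parabolic results themselves rather than from an exponent-perturbation argument.
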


\begin{rem*}
Compared with the continuous case \cite{GS2,GS1,G}, we conjecture
that if $u$ is a non-negative solution of (\ref{eq:p-Laplaican})
in $\mathbb{Z}^{N}$ with $2\leq q<p^{\ast}$, then $u\equiv0$. According
to Lions \cite[Corollary I.1]{L3}, we conjecture that (\ref{eq:p-Laplaican})
has a positive solution when $q=p^{\ast}$. For $p=2$, we don't know
the existence of non-trivial non-negative solutions for (\ref{eq:p-Laplaican})
when $\frac{2+2N}{N}<q\leq2^{\ast}=\frac{2N}{N-2}$. 
\end{rem*}
By Theorem \ref{thm:main2}, we can get the Euler-Lagrange equation
for (\ref{eq:hls1}) as follows, see \cite{CL3,CL4,CL1,CL2,CLO1,CLO2,CLO3}
for continuous setting and \cite{CZ,HLY} for discrete setting. 
\begin{cor}
\label{cor:E-R eq}For $r,s>1$, $0<\lambda<N$, $\frac{1}{r}+\frac{1}{s}+\frac{\lambda}{N}>2,$
there is a pair of positive solution $(f,g)$ of the following Euler-Lagrange
equation for (\ref{eq:hls1}), 
\begin{equation}
\begin{cases}
K\left(f\left(i\right)\right)^{r-1}=\underset{j\neq i}{\sum}\frac{g(j)}{\mid i-j\mid^{\lambda}}\\
K\left(g\left(i\right)\right)^{s-1}=\underset{j\neq i}{\sum}\frac{f(j)}{\mid i-j\mid^{\lambda}}.
\end{cases}\label{eq:E-L eq}
\end{equation}
\end{cor}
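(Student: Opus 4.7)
The plan is to realize the system (\ref{eq:E-L eq}) as the first-order optimality condition at the maximizing pair produced by Theorem~\ref{thm:main2}, and then to upgrade the resulting non-negative pair to a strictly positive one by exploiting the positivity of the kernel $|i-j|^{-\lambda}$.

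First I would invoke Theorem~\ref{thm:main2} in the supercritical regime $\frac{1}{r}+\frac{1}{s}+\frac{\lambda}{N}>2$ to fix a maximizing pair $(f,g)\in\ell^{r}(\mathbb{Z}^{N})\times\ell^{s}(\mathbb{Z}^{N})$ for (\ref{eq:hls1}), normalized so that $\|f\|_{r}=\|g\|_{s}=1$ and $J(f,g):=\sum_{i\neq j}f(i)g(j)|i-j|^{-\lambda}=K$. Since the kernel is non-negative, the triangle inequality gives $J(|f|,|g|)\geq J(f,g)$ while the $\ell^{r}$, $\ell^{s}$ norms are preserved by taking absolute values; thus I may assume $f,g\geq 0$ at the outset.

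Next I would derive the Euler--Lagrange system by a standard Lagrange-multiplier computation on the product of the unit spheres. For any finitely supported perturbation $\varphi$, the map $t\mapsto J(f+t\varphi,g)$ is smooth near $t=0$ with derivative $\sum_{i}\varphi(i)\sum_{j\neq i}g(j)|i-j|^{-\lambda}$, while $t\mapsto\|f+t\varphi\|_{r}^{r}$ has derivative $r\sum_{i}f(i)^{r-1}\varphi(i)$; both expressions are absolutely summable, as the inner sums lie in $\ell^{r'}(\mathbb{Z}^{N})$ by (\ref{eq:dhls-1}) applied to $g$. Because $f\not\equiv 0$ the constraint gradient is non-trivial, so the Lagrange-multiplier rule furnishes constants $\mu,\nu$ with
\begin{equation*}
\sum_{j\neq i}\frac{g(j)}{|i-j|^{\lambda}}=\mu r f(i)^{r-1},\qquad \sum_{j\neq i}\frac{f(j)}{|i-j|^{\lambda}}=\nu s g(i)^{s-1},\qquad i\in\mathbb{Z}^{N}.
\end{equation*}
Multiplying the first identity by $f(i)$ and summing, the left side equals $J(f,g)=K$ and the right side equals $\mu r\|f\|_{r}^{r}=\mu r$, so $\mu r=K$; symmetrically $\nu s=K$. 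Substituting these values back produces exactly the system (\ref{eq:E-L eq}).

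Finally I would read strict positivity off the equations themselves. Since $\|f\|_{r}=1$ there exists $j_{0}$ with $f(j_{0})>0$, and then $\sum_{k\neq i}f(k)|i-k|^{-\lambda}\geq f(j_{0})|i-j_{0}|^{-\lambda}>0$ for every $i\neq j_{0}$, forcing $g(i)>0$ off a single point. Inserting this into the first equation gives $f(i)>0$ for every $i\in\mathbb{Z}^{N}$, and one further pass yields $g>0$ everywhere. The main obstacle I anticipate is the rigorous justification of the infinite-dimensional Lagrange-multiplier step, but since the variations are along finitely supported directions and the dual summability is controlled by the discrete HLS inequality itself, this reduces to the usual finite-dimensional argument without essential modification.
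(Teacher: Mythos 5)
Your proposal is correct and follows essentially the same route as the paper: take the maximizing pair from Theorem~\ref{thm:main2}, pass to $(|f|,|g|)$, derive \eqref{eq:E-L eq} by the Lagrange-multiplier computation, and read off positivity from the equations. You simply carry out in detail the steps the paper states tersely (the identification of the multipliers as $K$ and the two-pass positivity bootstrap), and those details are sound.
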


The paper is organized as follows. In Section 2, we recall some basic
facts and prove some useful lemmas. In Section 3, we introduce the
Brézis-Lieb lemma and prove the Concentration-Compactness principle
in $\mathbb{Z}^{N}$. In Section 4, we prove a key lemma to exclude
the vanishing case and give the proof %
\mbox{%
I%
} for Theorem \ref{thm:main1}. In Section 5, we give another proof
\mbox{%
II%
} for Theorem \ref{thm:main1} and prove Theorem \ref{thm:main2} in
a similar way.

\section{Preliminary}

Consider integer lattice graph $\mathbb{Z}^{N}$, which is the graph
consisting of the set of vertices $V=\mathbb{Z}^{N}$ and the set
of edges 
\[
E=\left\{ \left\{ x,y\right\} :x,y\in\mathbb{Z}^{N},\mathop{\sum\limits _{i=1}^{N}\lvert x_{i}-y_{i}\rvert=1}\right\} .
\]

We denote the space of functions on $\mathbb{Z}^{N}$ by $C(\mathbb{Z}^{N})$.
For $u\in C(\mathbb{Z}^{N})$, its support set is defined as $\textrm{supp}(u)\coloneqq\{x\in\mathbb{Z}^{N}:u(x)\neq0\}$.
Let $C_{0}(\mathbb{Z}^{N})$ be the set of all functions with finite
support. For any $u\in C(\mathbb{Z}^{N})$, the $\ell^{p}$ norm of
$u$ is defined as 
\[
\lVert u\rVert_{\ell^{p}(\mathbb{Z}^{N})}\coloneqq\begin{cases}
\left(\underset{x\in\mathbb{Z}^{N}}{\sum}\lvert u(x)\rvert^{p}\right)^{1/p} & \text{\ensuremath{0<p<\infty,}}\\
\underset{x\in\mathbb{Z}^{N}}{\sup}\lvert u(x)\rvert & p=\infty.
\end{cases}
\]
The $\ell^{p}(\mathbb{Z}^{N})$ space is defined as 
\[
\ell^{p}(\mathbb{Z}^{N})\coloneqq\left\{ u\in C(\mathbb{Z}^{N}):\lVert u\rVert_{{\ell^{p}(\mathbb{Z}^{N})}}<\infty\right\} .
\]
In this paper, we shall write $\lVert u\rVert_{{\ell^{p}(\mathbb{Z}^{N})}}$
as $\lVert u\rVert_{{p}}$ for convenience, when there is no confusion.

For any $u\in C(\mathbb{Z}^{N})$, we define difference operator for
any $x\sim y$ as

\[
\nabla_{xy}u=u(y)-u(x).
\]
Let 
\[
\lvert\nabla u(x)\rvert_{p}\coloneqq\left(\sum\limits _{y\sim x}\lvert\nabla_{xy}u\rvert^{p}\right)^{1/p}
\]
be the $p$-norm of the gradient of $u$ at $x$.

The $D^{1,p}$ norm of $u$ is given by 
\[
\lVert u\rVert_{D^{1,p}(\mathbb{Z}^{N})}\coloneqq\lVert\lvert\nabla u\rvert_{p}^{p}\rVert_{\ell^{1}(\mathbb{Z}^{N})}^{1/p}=\left(\sum\limits _{x\in\mathbb{Z}^{N}}\sum\limits _{y\sim x}\lvert\nabla_{xy}u\rvert^{p}\right)^{1/p},
\]
and $D^{1,p}(\mathbb{Z}^{N})$ is the completion of $C_{0}\left(\mathbb{Z}^{N}\right)$
in $D^{1,p}$ norm.

The following lemma is well-known, see \cite[Lemma 2.1]{HLY}. 
\begin{lem}
\label{sobo2}Suppose $u\in l^{p}(\mathbb{Z}^{N})$, then $\lVert u\rVert_{q}\leq\lVert u\rVert_{p},\forall q\geq p.$ 
\end{lem}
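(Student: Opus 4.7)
The plan is to exploit the fact that on a discrete space (equipped with counting measure), the $\ell^p$ spaces are \emph{nested in the reverse direction} from the continuous $L^p$ spaces on a finite-measure set, because every nonzero term contributes at least mass $1$ to the $p$-th power sum. So the proof should boil down to the elementary observation that a number in $[0,1]$ raised to a larger exponent is smaller.

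First, I would dispose of the trivial case $u\equiv 0$. Then, by homogeneity of both sides (both $\|\cdot\|_p$ and $\|\cdot\|_q$ are $1$-homogeneous), I would rescale and assume without loss of generality that $\|u\|_p=1$. The key pointwise observation is then that
\[
|u(x)|^p\le \sum_{y\in\mathbb{Z}^N}|u(y)|^p=\|u\|_p^p=1\quad\text{for every }x\in\mathbb{Z}^N,
\]
so $|u(x)|\le 1$ everywhere.

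For $p\le q<\infty$, since $0\le|u(x)|\le1$ and $q\ge p$, one has $|u(x)|^q\le|u(x)|^p$ pointwise. Summing over $\mathbb{Z}^N$ yields $\|u\|_q^q\le\|u\|_p^p=1$, and taking $q$-th roots gives $\|u\|_q\le 1=\|u\|_p$. Undoing the normalization recovers the claim for arbitrary $u$. For the endpoint $q=\infty$, the same pointwise bound $|u(x)|\le\|u\|_p$ immediately gives $\|u\|_\infty\le\|u\|_p$ after taking the supremum.

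There is no real obstacle here: the statement is the classical nesting $\ell^p(\mathbb{Z}^N)\hookrightarrow\ell^q(\mathbb{Z}^N)$ for $q\ge p$ with operator norm $\le 1$, and the entire argument is a one-line convexity-free computation once the normalization $\|u\|_p=1$ is in place. The only thing to be mildly careful about is handling the $q=\infty$ case and the trivial $u\equiv 0$ case separately from the main chain of inequalities.
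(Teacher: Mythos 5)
Your proof is correct and is the standard argument (normalize $\lVert u\rVert_{p}=1$, deduce $\lvert u(x)\rvert\le 1$ pointwise, compare exponents, sum); the paper itself gives no proof but cites \cite{HLY}, where the same elementary computation is used. Nothing is missing.
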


The combinatorial distance $d$ is defined as $d(x,y)=\text{inf}{\left\{ k:x=x_{0}\sim\cdot\cdot\cdot\sim x_{k}=y\right\} }$,
i.e. the length of the shortest path connecting $x$ and $y$ by assigning
each edge of length one.

Let $\Omega$ be a subset of $\mathbb{Z}^{N}$. We denoted by 
\[
\delta\Omega\coloneqq\left\{ x\in\mathbb{Z}^{N}\backslash\Omega:\exists y\in\Omega,s.t.\;x\sim y\right\} 
\]
the vertex boundary of $\Omega$, possibly an empty set. We set $\bar{\Omega}\coloneqq\Omega\cup\delta\Omega$.

We denoted by $c_{0}(\mathbb{Z}^{N})$ the completion of $C_{0}\left(\mathbb{Z}^{N}\right)$
in $\ell^{\infty}$ norm. Then it is well-known that $\ell^{1}(\mathbb{Z}^{N})=(c_{0}(\mathbb{Z}^{N}))^{\ast}$.
We set 
\[
\lVert\mu\rVert\coloneqq\sup\limits _{u\in c_{0}(\mathbb{Z}^{N}),\lVert u\rVert_{\infty}=1}\langle\mu,u\rangle,\qquad\forall\mu\in\ell^{1}(\mathbb{Z}^{N}).
\]
By definition, 
\[
\mu_{n}\xrightharpoonup{w^{\ast}}\mu\;\text{in}\;\ell^{1}(\mathbb{Z}^{N})\;\text{if and only if}\;\langle\mu_{n},u\rangle\longrightarrow\langle\mu,u\rangle,\forall u\in c_{0}(\mathbb{Z}^{N}).
\]
In the proof, we will use the following facts (see \cite{C2}). 
\begin{fact*}
(a)Every bounded sequence of $\ell^{1}(\mathbb{Z}^{N})$ contains
a $w^{\ast}$-convergent subsequence. \\
 (b)If $\mu_{n}\xrightharpoonup{w^{\ast}}\mu$ in $\ell^{1}(\mathbb{Z}^{N})$,
then ${\mu_{n}}$ is bounded and 
\[
\lVert\mu\rVert\leq\varliminf\limits _{n\to\infty}\lVert\mu_{n}\rVert.
\]
(c)If $\mu\in\ell^{1+}(\mathbb{Z}^{N}):=\left\{ \mu\in\ell^{1}(\mathbb{Z}^{N}):\mu\geq0\right\} $,
then 
\[
\lVert\mu\rVert=\langle\mu,1\rangle.
\]
\end{fact*}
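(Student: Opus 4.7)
The plan is to establish (a), (b), (c) by standard functional-analytic arguments, exploiting the concrete nature of the predual $c_0(\mathbb{Z}^N)$ and the countability of $\mathbb{Z}^N$.

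For (a), the key observation is that $c_0(\mathbb{Z}^N)$ is separable: the rational-valued finitely supported functions form a countable dense subset. Given a bounded sequence $\{\mu_n\}\subset\ell^1(\mathbb{Z}^N)$ with $\|\mu_n\|_1\le M$, I would enumerate $\mathbb{Z}^N=\{x_k\}_{k\ge 1}$ and use a diagonal argument: since $|\mu_n(x_k)|\le M$ for every $k$, successive extractions yield a subsequence $\mu_{n_j}$ with $\mu_{n_j}(x_k)\to\mu(x_k)$ for each $k$, and Fatou's lemma gives $\mu\in\ell^1(\mathbb{Z}^N)$ with $\|\mu\|_1\le M$. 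To upgrade pointwise to $w^{\ast}$-convergence, fix $u\in c_0(\mathbb{Z}^N)$ and $\varepsilon>0$: choose a finite set $F\subset\mathbb{Z}^N$ with $|u(x)|<\varepsilon/(3M)$ off $F$; the sum over $F$ is finite and converges by construction, while the tails contribute at most $\varepsilon/3$ uniformly in $j$ since $\|\mu_{n_j}\|_1,\|\mu\|_1\le M$. Alternatively one may simply invoke the sequential Banach--Alaoglu theorem on the separable predual.

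For (b), the boundedness of $\{\mu_n\}$ follows from the Banach--Steinhaus principle applied to the bounded linear functionals $T_n(u):=\langle\mu_n,u\rangle$ on the Banach space $c_0(\mathbb{Z}^N)$: pointwise boundedness at every $u\in c_0(\mathbb{Z}^N)$, which is furnished by $w^{\ast}$-convergence, yields a uniform bound $\sup_n\|T_n\|=\sup_n\|\mu_n\|<\infty$. The lower semicontinuity $\|\mu\|\le\varliminf_n\|\mu_n\|$ is then immediate from the definition: for every admissible $u$ with $\|u\|_\infty=1$,
\[
\langle\mu,u\rangle=\lim_{n\to\infty}\langle\mu_n,u\rangle\le\varliminf_{n\to\infty}\|\mu_n\|,
\]
and taking the supremum over such $u$ gives the claim.

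For (c), I split into two inequalities. For any $u\in c_0(\mathbb{Z}^N)$ with $\|u\|_\infty=1$ and $\mu\ge 0$, the pointwise bound $|u(x)|\le 1$ gives $\langle\mu,u\rangle\le\sum_x\mu(x)=\langle\mu,1\rangle$, hence $\|\mu\|\le\langle\mu,1\rangle$. Since $1\notin c_0(\mathbb{Z}^N)$, the reverse inequality requires approximation: let $u_R:=\chi_{B_R}$ be the indicator of the combinatorial ball of radius $R$ around the origin; then $u_R\in c_0(\mathbb{Z}^N)$ with $\|u_R\|_\infty=1$, and by monotone convergence
\[
\|\mu\|\ge\langle\mu,u_R\rangle=\sum_{x\in B_R}\mu(x)\longrightarrow\sum_{x\in\mathbb{Z}^N}\mu(x)=\langle\mu,1\rangle
\]
as $R\to\infty$. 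No step is a genuine obstacle: these are textbook facts, and the only point requiring a moment of care is the approximation argument in (c) to circumvent the fact that the constant function $1$ does not lie in the predual $c_0(\mathbb{Z}^N)$.
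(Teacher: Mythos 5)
Your proof is correct in all three parts. Note that the paper itself does not prove this Fact at all: it simply records the statements as well known and cites a functional analysis reference (the identification $\ell^{1}(\mathbb{Z}^{N})=(c_{0}(\mathbb{Z}^{N}))^{\ast}$ together with sequential Banach--Alaoglu for duals of separable spaces, lower semicontinuity of the dual norm under $w^{\ast}$-convergence via Banach--Steinhaus, and the evaluation of the norm of a positive functional). Your self-contained argument is exactly the standard one that the citation is pointing to: the diagonal extraction plus the tail estimate $\lvert u(x)\rvert<\varepsilon/(3M)$ off a finite set is the concrete form of sequential Banach--Alaoglu here, the uniform boundedness principle on the Banach space $c_{0}(\mathbb{Z}^{N})$ gives (b), and the exhaustion by indicators $\chi_{B_{R}}$ correctly handles the fact that the constant function $1$ is not in the predual in (c). No gaps.
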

In the functional analysis, the following lemma is well-known, see
\cite{C2} for a proof. 
\begin{lem}
\label{lem:exist g}For $1<p<\infty,$ $\frac{1}{p}+\frac{1}{q}=1$,
let $0\ensuremath{\not\equiv}f\in\ell^{p}(\mathbb{Z}^{N})$, then
there exists unique $g\in\ell^{q}(\mathbb{Z}^{N})$ with $\lVert g\rVert_{q}=1$
satisfying 
\[
\lVert f\rVert_{p}=\underset{i}{\sum}f(i)g(i)=\underset{\lVert h\rVert_{q}\leq1}{\max}\underset{i}{\sum}f(i)h(i).
\]
\end{lem}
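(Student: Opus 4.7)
The plan is to produce the candidate $g$ by an explicit formula dual to $f$, check by direct computation that it attains equality in H\"older's inequality, and then deduce both the supremum formula and uniqueness from the equality case in H\"older.

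First I would set
\[
g(i):=\frac{\operatorname{sign}(f(i))\,\lvert f(i)\rvert^{p-1}}{\lVert f\rVert_{p}^{\,p-1}},\qquad i\in\mathbb{Z}^{N}.
\]
Using the conjugate relation $q(p-1)=p$, a direct computation gives $\lVert g\rVert_{q}^{q}=\lVert f\rVert_{p}^{-p}\sum_{i}\lvert f(i)\rvert^{p}=1$, so $g\in\ell^{q}(\mathbb{Z}^{N})$ with $\lVert g\rVert_{q}=1$. A second direct computation yields $\sum_{i}f(i)g(i)=\lVert f\rVert_{p}^{-(p-1)}\sum_{i}\lvert f(i)\rvert^{p}=\lVert f\rVert_{p}$.

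Next, for any $h\in\ell^{q}(\mathbb{Z}^{N})$ with $\lVert h\rVert_{q}\leq 1$, H\"older's inequality on $\mathbb{Z}^{N}$ gives
\[
\sum_{i}f(i)h(i)\leq\sum_{i}\lvert f(i)\rvert\,\lvert h(i)\rvert\leq\lVert f\rVert_{p}\lVert h\rVert_{q}\leq\lVert f\rVert_{p},
\]
and the previous step shows that $h=g$ saturates both inequalities. This proves the identity
\[
\lVert f\rVert_{p}=\sum_{i}f(i)g(i)=\max_{\lVert h\rVert_{q}\leq 1}\sum_{i}f(i)h(i).
\]

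For uniqueness, suppose $\tilde g\in\ell^{q}(\mathbb{Z}^{N})$ with $\lVert\tilde g\rVert_{q}=1$ also satisfies $\sum_{i}f(i)\tilde g(i)=\lVert f\rVert_{p}$. Then both inequalities in the H\"older chain above must be equalities for $h=\tilde g$. The first equality $\sum_{i}f(i)\tilde g(i)=\sum_{i}\lvert f(i)\rvert\,\lvert\tilde g(i)\rvert$ forces $\operatorname{sign}(\tilde g(i))=\operatorname{sign}(f(i))$ on $\operatorname{supp}(f)$. The equality case in discrete H\"older then forces $\lvert\tilde g(i)\rvert^{q}=c\,\lvert f(i)\rvert^{p}$ on $\operatorname{supp}(f)$ for some constant $c>0$, while on $\mathbb{Z}^{N}\setminus\operatorname{supp}(f)$ the entries of $\tilde g$ contribute nothing to $\sum f\tilde g$ but increase $\lVert\tilde g\rVert_{q}$, so the normalization $\lVert\tilde g\rVert_{q}=1$ together with maximality forces $\tilde g$ to vanish there. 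Combining proportionality, the sign condition, and $\lVert\tilde g\rVert_{q}=1$ pins down the constant and shows $\tilde g=g$.

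The only mildly subtle point is the uniqueness step, which rests on the strict convexity of $t\mapsto t^{q}$ for $1<q<\infty$ (used implicitly in the H\"older equality analysis) and thus genuinely needs $1<p<\infty$. Existence is entirely explicit; no weak compactness is required since we work with an explicit minimizing element.
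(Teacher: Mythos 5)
Your proof is correct. Note, however, that the paper does not actually prove this lemma: it is stated as a well-known fact from functional analysis (duality of $\ell^{p}$ and $\ell^{q}$, attainment and uniqueness of the norming functional, which ultimately rests on uniform convexity of $\ell^{q}$) and a textbook reference is cited. Your explicit construction $g=\operatorname{sign}(f)\,\lvert f\rvert^{p-1}/\lVert f\rVert_{p}^{p-1}$ is a clean, self-contained substitute: the identity $q(p-1)=p$ gives $\lVert g\rVert_{q}=1$ and $\sum f g=\lVert f\rVert_{p}$, and H\"older gives the maximality. For uniqueness, your argument works but is slightly redundant: the equality case of H\"older for $1<p<\infty$ already says that $\lvert f\rvert^{p}$ and $\lvert\tilde g\rvert^{q}$ are proportional with a nonzero constant (since $f\not\equiv0$ and $\tilde g\not\equiv0$), and this proportionality by itself forces $\tilde g$ to vanish off $\operatorname{supp}(f)$, so the separate ``maximality forces vanishing'' remark is not needed. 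With that small streamlining, your proof is a complete and more elementary alternative to the cited functional-analytic fact; what the citation buys the paper is brevity, and what your version buys is an explicit formula for the dual element $g$, which is in fact the one implicitly used when the lemma is applied to produce the second member of the maximizing pair in Theorem \ref{thm:main2}.
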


$G=(V,E)$ is a locally finite unweighted graph, that is, the degree
$d_{x}\coloneqq\sharp\left\{ y:y\sim x\right\} $ is finite for each
$x\in V$. Then for any $u\in C(V)$, we define the normalized Laplace
as 
\[
\Delta u(x)\coloneqq\sum\limits _{y\sim x}\frac{1}{d_{x}}(u(y)-u(x)).
\]
Then we have the following lemma. 
\begin{lem}
\label{lem:coro1}For a locally finite graph $G=(V,E)$, if $u$ is
a non-negative solution of equation 
\[
-\Delta u=u^{a},\;a>1.
\]
Then $u(x)\leq1,\forall x\in V.$ 
\end{lem}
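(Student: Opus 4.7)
The plan is to prove the bound by a simple pointwise argument directly from the equation, without appealing to any maximum principle. The key is to rewrite $-\Delta u = u^a$ in a form where non-negativity of $u$ immediately gives a scalar inequality at each vertex.

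First I would unfold the definition of the normalized Laplacian to rewrite the equation pointwise. At any $x \in V$,
\[
-\Delta u(x) \;=\; u(x) \;-\; \frac{1}{d_x}\sum_{y\sim x} u(y) \;=\; u(x)^a,
\]
which rearranges to
\[
u(x) \;=\; u(x)^a \;+\; \frac{1}{d_x}\sum_{y\sim x} u(y).
\]
Since $u \geq 0$ everywhere and $d_x \geq 1$, the averaging term on the right is non-negative, so we conclude $u(x) \geq u(x)^a$ at every vertex.

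Next I would split into cases at a fixed $x$. If $u(x) = 0$ the bound $u(x) \leq 1$ is trivial. If $u(x) > 0$, I divide the inequality $u(x) \geq u(x)^a$ by $u(x)$ to get $1 \geq u(x)^{a-1}$, and since $a - 1 > 0$ this yields $u(x) \leq 1$. Combining the two cases gives $u(x) \leq 1$ for all $x \in V$, as desired.

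There is essentially no obstacle here; the proof is a one-line pointwise manipulation once the normalized Laplacian is written out. The only small point to note is that local finiteness of $G$ is what makes the rewriting of $-\Delta u(x)$ legitimate (the sum over neighbors is a finite sum, so $\Delta u(x)$ is well defined without any summability hypothesis on $u$), but this is given.
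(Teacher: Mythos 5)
Your proof is correct and is essentially the same as the paper's: both unfold the normalized Laplacian, use non-negativity of $u$ at the neighbors to deduce $u(x)\geq u(x)^{a}$, and then divide by $u(x)$ in the nontrivial case. No further comment is needed.
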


\begin{proof}
For any $x\in V,$ without loss of generality, we can assume that
$u(x)>0$. Then we get 
\[
u(x)\geq-\sum\limits _{y\sim x}\frac{1}{d_{x}}(u(y)-u(x))=u(x)^{a}.
\]
Hence, $u(x)\leq1.$ 
\end{proof}
By Lemma \ref{lem:coro1}, we can prove Theorem \ref{thm:nonexist sol}
in Section 1. 
\begin{proof}[Proof of Theorem \ref{thm:nonexist sol}]
If $u$ is a non-trivial non-negative solution for (\ref{eq:p-Laplaican}),
we can define $v(t,x)\coloneqq u(x)$, which satisfies the following
heat equation 
\[
\begin{cases}
\begin{array}{c}
v_{t}=\Delta_{2}v+v^{q-1}\\
v(0,x)=u(x)
\end{array} & \begin{array}{c}
\text{in \ensuremath{(0,+\infty)\times\mathbb{Z}^{N},}}\\
\text{in \ensuremath{\mathbb{Z}^{N},}}
\end{array}\end{cases}
\]
where $u(x)$ is bounded by Lemma \ref{lem:coro1}. Using the results
in Lin and Wu's papers \cite{LW,W2}, we know that for $0<N(q-2)<2$,
any non-trivial non-negative solution $v$ is not global, i.e. $v$
blows up in finite time, which yields a contradiction. This proves
the theorem. 
\end{proof}

\section{Concentration-Compactness Principle}

In this section, we prove the discrete Concentration-Compactness principle.
We first introduce a key lemma as follows \cite[Theorem 1]{BL}.

Consider a measure space $(\Omega,\Sigma,\mu)$, which consists of
a set $\Omega$ equipped with a $\sigma$-algebra $\Sigma$ and a
Borel measure $\mu:\Sigma\longrightarrow\left[0,\infty\right]$. 
\begin{lem}
\label{lem:(Discrete-Brezis-Lieb-lemma)}(Brézis-Lieb lemma) Let $(\Omega,\Sigma,\mu)$
be a measure space, $\{u_{n}\}\subset L^{p}(\Omega,\Sigma,\mu)$,
and $0<p<\infty.$ If \\
 (a)$\;\{u_{n}\}$ is uniformly bounded in $L^{p}$, \\
 (b)$\;u_{n}\longrightarrow u,n\longrightarrow\infty$ $\mu$-almost
everywhere in $\Omega$, then 
\begin{equation}
\lim\limits _{n\to\infty}(\lVert u_{n}\rVert_{L^{p}}^{p}-\lVert u_{n}-u\rVert_{L^{p}}^{p})=\lVert u\rVert_{L^{p}}^{p}.\label{eq:weak}
\end{equation}
\end{lem}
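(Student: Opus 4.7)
The plan is to reduce the claim to showing that the non-negative integrand
\[
G_n := \bigl| |u_n|^p - |u_n - u|^p - |u|^p \bigr|
\]
converges to $0$ in $L^1(\Omega,\Sigma,\mu)$; the stated identity then follows since $\|u_n\|_{L^p}^p - \|u_n-u\|_{L^p}^p - \|u\|_{L^p}^p = \int (|u_n|^p - |u_n-u|^p - |u|^p)\,d\mu$ and the integrand is bounded by $G_n$. Pointwise a.e.\ convergence $G_n \to 0$ is immediate from hypothesis (b) and the continuity of $t \mapsto |t|^p$; the task is to upgrade this to convergence of integrals. First I would invoke Fatou's lemma applied to $|u_n|^p$ to conclude $u \in L^p$, so that $|u|^p$ becomes an available dominator.

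The key technical ingredient is an elementary Young-type inequality: for every $\varepsilon > 0$ there exists $C_\varepsilon > 0$ such that
\[
\bigl| |a+b|^p - |a|^p \bigr| \le \varepsilon\, |a|^p + C_\varepsilon\, |b|^p, \qquad \forall\, a,b \in \mathbb{R}.
\]
One proves this by scaling to the one-variable estimate $\bigl| |1+t|^p - 1 \bigr| \le \varepsilon + C_\varepsilon |t|^p$, which in turn follows by splitting into $|t|$ small (where continuity of $|\cdot|^p$ at $1$ absorbs the $\varepsilon$ term) and $|t|$ bounded away from $0$ (where $|1+t|^p \le C|t|^p$).

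Applying this inequality pointwise with $a = u_n - u$ and $b = u$ yields
\[
G_n \le \varepsilon\, |u_n - u|^p + (C_\varepsilon + 1)|u|^p.
\]
I then introduce the truncation $W_n^\varepsilon := (G_n - \varepsilon\, |u_n - u|^p)_+$, which is dominated by the fixed integrable function $(C_\varepsilon+1)|u|^p$ and still tends to $0$ a.e.; the dominated convergence theorem gives $\int W_n^\varepsilon\,d\mu \to 0$. Since $G_n \le W_n^\varepsilon + \varepsilon\, |u_n - u|^p$ and $\{\|u_n - u\|_{L^p}^p\}$ is uniformly bounded (by a multiple of $\sup_n \|u_n\|_{L^p}^p + \|u\|_{L^p}^p$ via the triangle inequality and hypothesis (a)), I obtain $\limsup_n \int G_n\,d\mu \le \varepsilon \cdot M$ with $M$ independent of $\varepsilon$; sending $\varepsilon \downarrow 0$ closes the argument.

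The main obstacle is the Young-type pointwise inequality: it is straightforward for $p \ge 1$ by convexity, but for $0 < p < 1$ one must argue more carefully since $|\cdot|^p$ is only subadditive. Moreover, $C_\varepsilon$ blows up as $\varepsilon \to 0$, so the truncation $W_n^\varepsilon$ is essential — it keeps the dominating function fixed so that dominated convergence can be applied before $\varepsilon$ is sent to zero.
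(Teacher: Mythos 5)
Your proof is correct: it is precisely the original Br\'ezis--Lieb argument (the $\varepsilon$-Young inequality $\bigl||a+b|^p-|a|^p\bigr|\le\varepsilon|a|^p+C_\varepsilon|b|^p$ together with the truncation $W_n^\varepsilon$ and dominated convergence), and the paper itself offers no proof of this lemma, simply citing it as Theorem~1 of Br\'ezis--Lieb. The one point you flag as delicate, the case $0<p<1$, is in fact the easy case: there one has $\bigl||a+b|^p-|a|^p\bigr|\le|b|^p$ outright by subadditivity of $t\mapsto t^p$, so the conclusion follows directly by dominated convergence with dominator $|u|^p$, no $\varepsilon$-truncation needed.
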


\begin{rem*}
(1) The preceding lemma is a refinement of Fatou's Lemma.

(2) Since $\;\{u_{n}\}$ is uniformly bounded in $L^{p}$, passing
to a subsequence if necessary, we have 
\[
\lim\limits _{n\to\infty}\lVert u_{n}\rVert_{p}^{p}=\lim\limits _{n\to\infty}\lVert u_{n}-u\rVert_{p}^{p}+\lVert u\rVert_{p}^{p}.
\]

(3) If $\Omega$ is countable and $\mu$ is a positive measure defined
on $\Omega$, then we get a discrete version of Lemma \ref{lem:(Discrete-Brezis-Lieb-lemma)}. 
\end{rem*}
\begin{cor}
\label{cor:Corollary-1.} Let $\Omega\subset\mathbb{Z}^{N}$, ${\left\{ u_{n}\right\} }\subset D^{1,p}(\mathbb{Z}^{N})$,
and $1\leq p<\infty$. If \\
 (a)$'$ $\left\{ {u_{n}}\right\} $ is uniformly bounded in $D^{1,p}(\mathbb{Z}^{N})$,
\\
 (b)$'$ $u_{n}\longrightarrow u,n\longrightarrow\infty$ pointwise
in $\mathbb{Z}^{N}$, then 
\begin{equation}
\lim\limits _{n\to\infty}\left(\sum\limits _{i\in\Omega}\lvert\nabla u_{n}(i)\rvert_{p}^{p}-\sum\limits _{i\in\Omega}\lvert\nabla(u_{n}-u)(i)\rvert_{p}^{p}\right)=\sum\limits _{i\in\Omega}\lvert\nabla u(i)\rvert_{p}^{p}.\label{eq:gradient}
\end{equation}
\end{cor}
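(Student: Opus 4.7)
The plan is to reduce Corollary 3.2 directly to the Brézis–Lieb lemma (Lemma 3.1) by choosing the correct underlying measure space. Namely, let
\[
\mathcal{E}_{\Omega} := \bigl\{(i,y) : i \in \Omega,\; y \sim i,\; y \in \mathbb{Z}^{N}\bigr\},
\]
equipped with the counting measure $\mu$. This is a countable set, so $(\mathcal{E}_{\Omega},2^{\mathcal{E}_{\Omega}},\mu)$ falls under Lemma 3.1. For each $n$ define $v_{n} \in L^{p}(\mathcal{E}_{\Omega},\mu)$ by $v_{n}(i,y) := u_{n}(y) - u_{n}(i)$, and similarly $v(i,y) := u(y) - u(i)$.

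With these choices, the two hypotheses of Lemma 3.1 translate directly. First, pointwise convergence $u_{n} \to u$ on $\mathbb{Z}^{N}$ immediately gives $v_{n}(i,y) \to v(i,y)$ for every $(i,y) \in \mathcal{E}_{\Omega}$, which is $\mu$-a.e. convergence on a counting-measure space. Second, for the uniform $L^{p}$ bound I note that
\[
\|v_{n}\|_{L^{p}(\mu)}^{p} = \sum_{i\in\Omega}\sum_{y\sim i}|u_{n}(y)-u_{n}(i)|^{p} = \sum_{i\in\Omega}|\nabla u_{n}(i)|_{p}^{p} \leq \|u_{n}\|_{D^{1,p}(\mathbb{Z}^{N})}^{p},
\]
which is bounded in $n$ by hypothesis (a)$'$. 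Therefore both assumptions of Lemma 3.1 are satisfied.

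Applying Lemma 3.1 on $(\mathcal{E}_{\Omega},\mu)$ then yields
\[
\lim_{n\to\infty}\bigl(\|v_{n}\|_{L^{p}(\mu)}^{p} - \|v_{n}-v\|_{L^{p}(\mu)}^{p}\bigr) = \|v\|_{L^{p}(\mu)}^{p}.
\]
Translating each norm back through the definition of $\mathcal{E}_{\Omega}$ recovers exactly the three sums appearing in \eqref{eq:gradient}, using the identity $(v_{n}-v)(i,y) = (u_{n}-u)(y) - (u_{n}-u)(i)$ so that $\|v_{n}-v\|_{L^{p}(\mu)}^{p} = \sum_{i\in\Omega}|\nabla(u_{n}-u)(i)|_{p}^{p}$.

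There is no real obstacle here beyond bookkeeping; the only point that might look delicate is whether the setup works uniformly when $\Omega$ is not all of $\mathbb{Z}^{N}$, but the bound $\|v_{n}\|_{L^{p}(\mu)}^{p} \leq \|u_{n}\|_{D^{1,p}}^{p}$ holds for arbitrary $\Omega$ since edges are counted with multiplicity bounded by the global $D^{1,p}$ norm. Hence the corollary follows at once. Note also that the argument covers the endpoint $p=1$, which is essential for the $p=1$ Sobolev case treated in Theorem \ref{thm:main1}.
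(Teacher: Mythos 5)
Your proof is correct and is essentially the same as the paper's: the paper also transfers the problem to the edge set $\widetilde{E}=E_{1}\cup E_{2}$, which coincides exactly with your $\mathcal{E}_{\Omega}$ (you simply avoid splitting it into internal and boundary-crossing edges), and then applies the Br\'ezis--Lieb lemma with the counting measure. No further comment is needed.
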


\begin{proof}
We define two directed edge sets as follows 
\[
E_{1}\coloneqq\left\{ e=\left(e_{-},e_{+}\right):e_{\pm}\in\Omega,e_{-}\sim e_{+}\right\} ,
\]
\[
E_{2}\coloneqq\left\{ e=\left(e_{-},e_{+}\right):\ensuremath{(e_{-},e_{+})\in}\Omega\times\ensuremath{\delta\Omega},e_{-}\sim e_{+}\right\} ,
\]
where $E_{1}$ is the set of internal edges of $\bar{\Omega}$, $E_{2}$
is the set of edges that cross the boundary of $\Omega$, and $e_{-}$
and $e_{+}$ are the initial and terminal endpoints of $e$.

Set $\widetilde{E}\coloneqq E_{1}\cup E_{2}$. We define $\phantom{}\overline{u},\mu:\widetilde{E}\longrightarrow\mathbb{R}$,
$\phantom{}\overline{u}(e)=u(e_{+})-u(e_{-})$, $\mu(e)=1$.

Then we get 
\[
\sum\limits _{\Omega}\lvert\nabla u_{n}(i)\rvert_{p}^{p}=\sum\limits _{\,\,E_{1}}\lvert\overline{u}_{n}(e)\rvert^{p}+\sum\limits _{\,\,E_{2}}\lvert\overline{u}_{n}(e)\rvert^{p}=\lVert\overline{u}_{n}\rVert_{\ell^{p}(\widetilde{E},\Sigma,\mu)}^{p}<\infty,
\]
\[
\overline{u}_{n}\longrightarrow\overline{u}\;\textrm{pointwise\;in}\;\widetilde{E}.
\]
For the measure space $(\widetilde{E},\Sigma,\mu)$, by Lemma \ref{lem:(Discrete-Brezis-Lieb-lemma)}
we have 
\[
\lim\limits _{n\to\infty}(\lVert\overline{u}_{n}\rVert_{\ell^{p}(\widetilde{E},\Sigma,\mu)}^{p}-\lVert\overline{u_{n}-u}\rVert_{\ell^{p}(\widetilde{E},\Sigma,\mu)}^{p})=\lVert\overline{u}\rVert_{\ell^{p}(\widetilde{E},\Sigma,\mu)}^{p},
\]
which is equivalent to the equation (\ref{eq:gradient}). 
\end{proof}
In the continuous setting, P. L. Lions \cite{L3}, Bianchi et al.\cite{BCS}
and Ben-Naoum et al.\cite{BTW} proved that the limit of the minimizing
sequence norm can be divided into three parts, i.e. the norm of the
limit, the norm of the limit of the difference between the sequence
and the limit, and the norm of the sequence at infinity. The corresponding
parts still satisfy the Sobolev inequalities, also see \cite[Lemma 1.40]{W}.
Next, we establish the Concentration-Compactness principle in the
lattice $\mathbb{Z}^{N}$. 
\begin{lem}
\label{lem:Concentration-Compact}(Discrete Concentration-Compactness
lemma) For $N\geq3,1\leq p<N,q\geq p^{\ast}$, if $\{u_{n}\}$ is
uniformly bounded in $D^{1,p}(\mathbb{Z}^{N})$. Then passing to a
subsequence if necessary, still denoted as $\{u_{n}\}$, we have 
\begin{equation}
u_{n}\longrightarrow u\quad\textrm{pointwise\;in}\;\mathbb{Z}^{N},\label{eq:pointwise}
\end{equation}

\begin{equation}
\lvert\nabla u_{n}\rvert_{p}^{p}\xrightharpoonup{w^{\ast}}\lvert\nabla u\rvert_{p}^{p}\quad\textrm{in}\;\ell^{1}(\mathbb{Z}^{N}).\label{eq:w-star}
\end{equation}
And the following limits 
\[
\lim\limits _{R\to\infty}\lim\limits _{n\to\infty}\sum\limits _{d(i,0)>R}\lvert\nabla u_{n}\left(i\right)\rvert_{p}^{p}\coloneqq\mu_{\infty},\:\lim\limits _{R\to\infty}\lim\limits _{n\to\infty}\sum\limits _{d(i,0)>R}\lvert u_{n}(i)\rvert^{q}\coloneqq\nu_{\infty},
\]
exist. For the above $\left\{ u_{n}\right\} $, we have 
\begin{equation}
\lvert\nabla(u_{n}-u)\rvert_{p}^{p}\xrightharpoonup{w^{\ast}}0\quad\text{in}\;\ell^{1}(\mathbb{Z}^{N}),\label{eq:nu=00003D00003D00003D00003D00003D00003D0}
\end{equation}
\begin{equation}
\lvert u_{n}-u\rvert^{q}\xrightharpoonup{w^{\ast}}0\quad\text{in}\;\ell^{1}(\mathbb{Z}^{N}),\label{eq:mu=00003D00003D00003D00003D00003D00003D0}
\end{equation}
\begin{equation}
\nu_{\infty}^{p/q}\leq S^{-1}\mu_{\infty},\label{eq:infinite}
\end{equation}
\begin{equation}
\lim\limits _{n\to\infty}\lVert u_{n}\rVert_{D^{1,p}}^{p}=\lVert u\rVert_{D^{1,p}}^{p}+\mu_{\infty},\label{eq:composition1}
\end{equation}
\begin{equation}
\lim\limits _{n\to\infty}\lVert u_{n}\rVert_{q}^{q}=\lVert u\rVert_{q}^{q}+\nu_{\infty}.\label{eq:composition2}
\end{equation}
\end{lem}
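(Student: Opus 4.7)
The plan is to extract a single subsequence through successive diagonal arguments, verify the listed properties in the natural order, and reserve the main effort for the concentration inequality (\ref{eq:infinite}).

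First I would construct the subsequence and establish (\ref{eq:pointwise})--(\ref{eq:w-star}) together with the existence of $\mu_{\infty}$ and $\nu_{\infty}$. Since $\{u_{n}\}$ is bounded in $D^{1,p}(\mathbb{Z}^{N})$, the Sobolev inequality (\ref{eq:discrete sobo1}) and Lemma \ref{sobo2} bound $\{u_{n}\}$ in $\ell^{q}(\mathbb{Z}^{N})$, so $u_{n}(i)$ is uniformly bounded at every vertex $i$; a diagonal argument gives (\ref{eq:pointwise}), and Fatou places $u \in D^{1,p}(\mathbb{Z}^{N})$. The sequence $\{|\nabla u_{n}|_{p}^{p}\}$ is bounded in $\ell^{1}(\mathbb{Z}^{N})$, so by Fact (a) a further $w^{\ast}$-convergent subsequence exists; pairing the $w^{\ast}$-limit with each $\delta_{i}\in c_{0}(\mathbb{Z}^{N})$ and using the pointwise convergence $\nabla u_{n}(i)\to\nabla u(i)$ identifies the limit as $|\nabla u|_{p}^{p}$, yielding (\ref{eq:w-star}). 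The tails $a_{n}(R):=\sum_{d(i,0)>R}|\nabla u_{n}(i)|_{p}^{p}$ and $b_{n}(R):=\sum_{d(i,0)>R}|u_{n}(i)|^{q}$ are uniformly bounded and non-increasing in $R$; one more diagonal extraction over $R\in\mathbb{N}$ produces a subsequence along which $\lim_{n}a_{n}(R)$ and $\lim_{n}b_{n}(R)$ exist for every $R$, and monotonicity in $R$ then guarantees that $\mu_{\infty}$ and $\nu_{\infty}$ are well-defined.

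For (\ref{eq:nu=00003D00003D00003D00003D00003D00003D0}) and (\ref{eq:mu=00003D00003D00003D00003D00003D00003D0}) I would first establish an elementary lemma: any $\ell^{1}$-bounded, pointwise null, non-negative sequence in $\ell^{1}(\mathbb{Z}^{N})$ is $w^{\ast}$-null --- test against $\phi\in c_{0}(\mathbb{Z}^{N})$, split at radius $R$, bound the outer part by $\sup_{d>R}|\phi|\cdot\sup_{n}\|\mu_{n}\|_{1}$, and send $n\to\infty$ then $R\to\infty$. Applying this to the non-negative sequences $|\nabla(u_{n}-u)|_{p}^{p}$ (bounded in $\ell^{1}$ by the triangle inequality in $D^{1,p}$) and $|u_{n}-u|^{q}$ (bounded in $\ell^{1}$ since both $\{u_{n}\}$ and $u$ lie in $\ell^{q}$) gives the two weak-$*$ statements. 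The decompositions (\ref{eq:composition1}) and (\ref{eq:composition2}) follow by splitting each norm at radius $R$: on $B_{R}$, Corollary \ref{cor:Corollary-1.} (resp.\ the scalar Br\'ezis--Lieb lemma on the finite set $B_{R}$) passes the limit $n\to\infty$ into $\sum_{B_{R}}|\nabla u|_{p}^{p}$ (resp.\ $\sum_{B_{R}}|u|^{q}$); on the complement the limit in $n$ is precisely $\lim_{n}a_{n}(R)$ (resp.\ $\lim_{n}b_{n}(R)$), and sending $R\to\infty$ produces the claimed decompositions.

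The central step, and the main technical obstacle, is the concentration inequality (\ref{eq:infinite}). My plan is a cutoff argument: fix $\phi_{R}:\mathbb{Z}^{N}\to[0,1]$ with $\phi_{R}\equiv 0$ on $B_{R}$, $\phi_{R}\equiv 1$ on $\mathbb{Z}^{N}\setminus B_{2R}$, and $|\phi_{R}(y)-\phi_{R}(x)|\leq C/R$ for every edge $x\sim y$; then apply the Sobolev inequality (\ref{eq:sobo1}) to $\phi_{R}u_{n}\in D^{1,p}$, obtaining
\[
\Bigl(\sum_{i}\phi_{R}(i)^{q}|u_{n}(i)|^{q}\Bigr)^{p/q}\leq S^{-1}\|\phi_{R}u_{n}\|_{D^{1,p}}^{p}.
\]
Sandwiching the left-hand side between $\sum_{d>2R}|u_{n}|^{q}$ and $\sum_{d>R}|u_{n}|^{q}$ and taking $n\to\infty$ then $R\to\infty$ forces convergence to $\nu_{\infty}^{p/q}$. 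For the right-hand side the discrete product identity
\[
\phi_{R}(y)u_{n}(y)-\phi_{R}(x)u_{n}(x)=\phi_{R}(y)(u_{n}(y)-u_{n}(x))+u_{n}(x)(\phi_{R}(y)-\phi_{R}(x))
\]
together with the refined convexity bound $(a+b)^{p}\leq(1+\varepsilon)^{p-1}a^{p}+C_{\varepsilon}b^{p}$ yields
\[
\|\phi_{R}u_{n}\|_{D^{1,p}}^{p}\leq(1+\varepsilon)^{p-1}\sum_{d(x,0)>R-1}|\nabla u_{n}(x)|_{p}^{p}+C_{\varepsilon}R^{-p}\sum_{R\leq d(x,0)\leq 2R}|u_{n}(x)|^{p}.
\]
The first term is at most $(1+\varepsilon)^{p-1}a_{n}(R-1)$, hence tends to $(1+\varepsilon)^{p-1}\mu_{\infty}$. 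The second, by H\"older with exponents $p^{\ast}/p$ and its conjugate, together with $|B_{2R}\setminus B_{R}|\leq CR^{N}$ and the identity $N(1-p/p^{\ast})=p$, is bounded by a constant multiple of $\bigl(\sum_{R\leq d(x,0)\leq 2R}|u_{n}(x)|^{p^{\ast}}\bigr)^{p/p^{\ast}}$; the same tail-diagonal extraction applied to the bounded $\ell^{p^{\ast}}$-sequence $\{|u_{n}|^{p^{\ast}}\}$ drives this to $0$ in the iterated limit $n\to\infty$, $R\to\infty$. Finally sending $\varepsilon\to 0$ produces (\ref{eq:infinite}). The principal difficulty lies in this cutoff step: one must combine the Sobolev embedding (for the $\ell^{p^{\ast}}$ annulus bound) with the $p$-convexity refinement (to suppress an extraneous $2^{p-1}$ factor), and order the three limits $n\to\infty$, $R\to\infty$, $\varepsilon\to 0$ carefully.
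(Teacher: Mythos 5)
Your proposal is correct, and for every part except the key inequality \eqref{eq:infinite} it follows essentially the paper's route (diagonal extraction for \eqref{eq:pointwise}, Banach--Alaoglu plus pointwise identification for \eqref{eq:w-star}, tail extraction and monotonicity for $\mu_{\infty},\nu_{\infty}$, the elementary ``$\ell^{1}$-bounded, pointwise null $\Rightarrow$ $w^{\ast}$-null'' observation, and the finite-sum splitting for \eqref{eq:composition1}--\eqref{eq:composition2}; note that on the finite set $B_{R}$ plain pointwise convergence already passes the limit into the sum, so invoking Br\'ezis--Lieb there is unnecessary and in fact asserts a different identity). For \eqref{eq:infinite} you take a genuinely different path. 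The paper applies the Sobolev inequality to $\Psi_{R}v_{n}$ with $v_{n}=u_{n}-u$ and a cutoff that jumps from $0$ to $1$ across a single shell; the error term $C_{\varepsilon}|\nabla_{ij}\Psi_{R}|^{p}|v_{n}(j)|^{p}$ then lives on finitely many edges and dies because $v_{n}\to0$ pointwise, after which Corollary \ref{cor:Corollary-1.} and the Br\'ezis--Lieb lemma are needed to transfer the tail limits from $v_{n}$ back to $u_{n}$. You instead apply Sobolev to $\phi_{R}u_{n}$ directly, pay for the non-vanishing of $u_{n}$ on the annulus with a $C/R$-Lipschitz cutoff, and kill the error term via H\"older, the volume bound $|B_{2R}\setminus B_{R}|\leq CR^{N}$, the exponent identity $N(1-p/p^{\ast})=p$, and the cancellation $\alpha(R-1)-\alpha(2R)\to0$ of the $\ell^{p^{\ast}}$ annulus tails in the iterated limit (which requires one more tail extraction for $\{|u_{n}|^{p^{\ast}}\}$, as you note). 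This is closer to Lions' original continuous argument: it buys you independence from the Br\'ezis--Lieb transfer step at the cost of the volume-growth computation; the paper's version exploits the discreteness (pointwise vanishing on a fixed finite edge set) to avoid all quantitative estimates on the cutoff. Both are complete proofs.
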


\begin{proof}
Since $\{u_{n}\}$ is uniformly bounded in $\ell^{q}(\mathbb{Z}^{N})$,
and hence in $\ell^{\infty}(\mathbb{Z}^{N})$. By diagonal principle,
passing to a subsequence we get (\ref{eq:pointwise}). Since $\left\{ \lvert\nabla u_{n}\rvert_{p}^{p}\right\} $
is uniformly bounded in $\ell^{1}(\mathbb{Z}^{N})$, we get (\ref{eq:w-star})
by the Banach-Alaoglu theorem and (\ref{eq:pointwise}). For every
$R\geq1$, passing to a subsequence if necessary, 
\[
\lim\limits _{n\to\infty}\sum\limits _{d(i,0)>R}\lvert\nabla u_{n}\left(i\right)\rvert_{p}^{p},\qquad\lim\limits _{n\to\infty}\sum\limits _{d(i,0)>R}\lvert u_{n}(i)\rvert^{q}.
\]
exist, where $d$ is the combinatorial distance as defined in Section
2. Then we can define $\mu_{\infty}$, $\nu_{\infty}$ by the monotonicity
in $R$.

Let $v_{n}:=u_{n}-u$, then $v_{n}\longrightarrow0$ pointwise in
$\mathbb{Z}^{N}$ and $\left\{ \lvert\nabla v_{n}\rvert_{p}^{p}\right\} $
is uniformly bounded in $\ell^{1}(\mathbb{Z}^{N})$. Then any subsequence
of $\left\{ \lvert\nabla v_{n}\rvert_{p}^{p}\right\} $ contains a
subsequence (still denoted as $\left\{ \lvert\nabla v_{n}\rvert_{p}^{p}\right\} $)
that $w^{\ast}$-converges to 0 in $\ell^{1}(\mathbb{Z}^{N})$, which
follows from 
\[
\sum h\lvert\nabla v_{n}\rvert_{p}^{p}\longrightarrow0,\text{ }\forall h\in C_{0}(\mathbb{Z}^{N}).
\]
Hence we get (\ref{eq:nu=00003D00003D00003D00003D00003D00003D0}).
Similarly, we get (\ref{eq:mu=00003D00003D00003D00003D00003D00003D0}).

For $R\geq1$, let $\Psi_{R}\in C(\mathbb{Z}^{N})$ such that $\Psi_{R}(i)=1$
for $d(i,0)\geq R+1$, $\Psi_{R}(i)=0$ for $d(i,0)\leq R$. By the
discrete Sobolev inequality (\ref{eq:sobo1}), we have 
\[
(\sum\lvert\Psi_{R}v_{n}\rvert^{q})^{p/q}\leq S^{-1}\sum\lvert\nabla(\Psi_{R}v_{n})\rvert_{p}^{p}=S^{-1}\underset{i}{\sum}\underset{j\sim i}{\sum}\lvert\nabla_{ij}\Psi_{R}v_{n}(j)+\Psi_{R}(i)\nabla_{ij}v_{n})\rvert^{p}.
\]
And for any $\varepsilon>0$, there exists $C_{\varepsilon}>0$ such
that 
\[
\lvert\nabla_{ij}\Psi_{R}v_{n}(j)+\Psi_{R}(i)\nabla_{ij}v_{n})\rvert^{p}\leq C_{\varepsilon}\lvert\nabla_{ij}\Psi_{R}\lvert^{p}\lvert v_{n}(j)\lvert^{p}+(1+\varepsilon)\lvert\nabla_{ij}v_{n}\lvert^{p}\lvert\Psi_{R}(i)\lvert^{p}.
\]
Since $v_{n}\longrightarrow0$ pointwise in $\mathbb{Z}^{N}$, by
$\varepsilon\longrightarrow0^{+}$ we obtain 
\begin{equation}
\varlimsup\limits _{n\to\infty}(\sum\lvert\Psi_{R}v_{n}\rvert^{q})^{p/q}\leq S^{-1}\varlimsup\limits _{n\to\infty}\sum\lvert\nabla v_{n}\rvert_{p}^{p}\Psi_{R}^{p}.\label{eq:c-c1}
\end{equation}
From the definition of $\Psi_{R}$, we have 
\begin{equation}
\lim\limits _{R\to\infty}\varlimsup\limits _{n\to\infty}\sum\limits _{d(i,0)>R}\lvert\nabla v_{n}(i)\rvert_{p}^{p}=\lim\limits _{R\to\infty}\varlimsup\limits _{n\to\infty}\sum\lvert\nabla v_{n}\rvert_{p}^{p}\Psi_{R}^{p},\label{eq:c-c2}
\end{equation}
\begin{equation}
\lim\limits _{R\to\infty}\varlimsup\limits _{n\to\infty}\sum\limits _{d(i,0)>R}\lvert v_{n}(i)\rvert^{q}=\lim\limits _{R\to\infty}\varlimsup\limits _{n\to\infty}\sum\lvert v_{n}\rvert^{q}\Psi_{R}^{q}.\label{eq:c-c3}
\end{equation}
By Lemma \ref{lem:(Discrete-Brezis-Lieb-lemma)} and Corollary \ref{cor:Corollary-1.},
we have 
\[
\lim\limits _{n\to\infty}\left(\sum\limits _{d(i,0)>R}\lvert\nabla u_{n}(i)\rvert_{p}^{p}-\sum\limits _{d(i,0)>R}\lvert\nabla v_{n}(i)\rvert_{p}^{p}\right)=\sum\limits _{d(i,0)>R}\lvert\nabla u(i)\rvert_{p}^{p},
\]
\[
\lim\limits _{n\to\infty}\left(\sum\limits _{d(i,0)>R}\lvert u_{n}(i)\rvert^{q}-\sum\limits _{d(i,0)>R}\lvert v_{n}(i)\rvert^{q}\right)=\sum\limits _{d(i,0)>R}\lvert u(i)\rvert^{q}.
\]
Hence by the above equalities, we get
\begin{equation}
\lim\limits _{R\to\infty}\lim\limits _{n\to\infty}\sum\limits _{d(i,0)>R}\lvert\nabla v_{n}(i)\rvert_{p}^{p}=\mu_{\infty},\label{eq:c-c4}
\end{equation}
\begin{equation}
\lim\limits _{R\to\infty}\lim\limits _{n\to\infty}\sum\limits _{d(i,0)>R}\lvert v_{n}(i)\rvert^{q}=\nu_{\infty}.\label{eq:c-c5}
\end{equation}
Combining the equations (\ref{eq:c-c1}), (\ref{eq:c-c2}), (\ref{eq:c-c3}),
(\ref{eq:c-c4}) and (\ref{eq:c-c5}), we get 
\[
\nu_{\infty}^{p/q}\leq S^{-1}\mu_{\infty}.
\]

Since $u_{n}\longrightarrow u$ pointwise in $\mathbb{Z}^{N}$, then
for every $R\geq1$, we have 
\begin{align*}
\varlimsup\limits _{n\to\infty}\sum\lvert\nabla u_{n}\rvert_{p}^{p} & =\varlimsup\limits _{n\to\infty}\left(\sum\Psi_{R}\lvert\nabla u_{n}\rvert_{p}^{p}+\sum(1-\Psi_{R})\lvert\nabla u_{n}\rvert_{p}^{p}\right)\\
 & =\varlimsup\limits _{n\to\infty}\sum\Psi_{R}\lvert\nabla u_{n}\rvert_{p}^{p}+\sum(1-\Psi_{R})\lvert\nabla u\rvert_{p}^{p}
\end{align*}
and 
\begin{align*}
\varlimsup\limits _{n\to\infty}\sum\lvert u_{n}\rvert^{q} & =\varlimsup\limits _{n\to\infty}\left(\sum\Psi_{R}\lvert u_{n}\rvert^{q}+\sum(1-\Psi_{R})\lvert u_{n}\rvert^{q}\right)\\
 & =\varlimsup\limits _{n\to\infty}\sum\Psi_{R}\lvert u_{n}\rvert^{q}+\sum(1-\Psi_{R})\lvert u\rvert^{q}.
\end{align*}
Letting $R\longrightarrow\infty$, we obtain 
\[
\lim\limits _{n\to\infty}\sum\lvert\nabla u_{n}\rvert_{p}^{p}=\mu_{\infty}+\sum\lvert\nabla u\rvert_{p}^{p}=\mu_{\infty}+\lVert u\rVert_{D^{1,p}}^{p},
\]
\[
\lim\limits _{n\to\infty}\sum\lvert u_{n}\rvert^{q}=\nu_{\infty}+\sum\lvert u\rvert^{q}=\nu_{\infty}+\lVert u\rVert_{q}^{q}.
\]
\end{proof}
\begin{rem*}
(1) We prove the case $p=1$ on $\mathbb{Z}^{N}$, while it is not
true in the continuous case, since $L^{1}(\mathbb{R}^{N})$ is not
a dual space of any normed linear space.

(2) The difference between the sequence and the limit $w^{\ast}$-converges
to $0$ in $\ell^{1}(\mathbb{Z}^{N})$, i.e. (\ref{eq:nu=00003D00003D00003D00003D00003D00003D0})
and (\ref{eq:mu=00003D00003D00003D00003D00003D00003D0}), which is
not true in continuous setting. For example, consider the sequence
of probability measures $\left\{ \delta_{n}\right\} $ in $[0,1]$,
where $\delta_{n}(x)\coloneqq n\chi_{[0,\frac{1}{n}]}\textrm{d}x$,
then $\delta_{n}\longrightarrow0$ almost everywhere in $[0,1]$.
However, $\delta_{n}\xrightharpoonup{w^{\ast}}\delta_{0}$ in $\left(C[0,1]\right)^{\ast}$
and the Dirac measure $\delta_{0}$ is non-zero. 
\end{rem*}

\section{Proof %
\mbox{%
I%
} for Theorem \ref{thm:main1}}

In this section, we will prove the existence of the extremal function
for the discrete Sobolev inequality (\ref{eq:sobo1}). Firstly, we
prove that the minimizing sequence after translation has a uniform
positive lower bound at the origin. This is crucial to rule out the
vanishing case of the limit function. 
\begin{lem}
\label{lem:lower bound}For $N\geq3,1\leq p<N,q>p^{\ast}$, let $\left\{ u_{n}\right\} \subset D^{1,p}(\mathbb{Z}^{N})$
be a minimizing sequence satisfying (\ref{eq:min seq}). Then $\varliminf\limits _{n\to\infty}\lVert u_{n}\rVert_{\ell^{\infty}}>0$. 
\end{lem}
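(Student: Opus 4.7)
The plan is to exploit the supercriticality $q > p^{\ast}$ via a simple interpolation between $\ell^{p^{\ast}}$ and $\ell^{\infty}$. Since $q > p^{\ast}$, for every $i \in \mathbb{Z}^N$ we have the pointwise bound
\[
|u_n(i)|^q = |u_n(i)|^{p^{\ast}} \cdot |u_n(i)|^{q-p^{\ast}} \leq |u_n(i)|^{p^{\ast}} \, \|u_n\|_{\ell^{\infty}}^{q-p^{\ast}}.
\]
Summing over $i \in \mathbb{Z}^N$ and using $\|u_n\|_q = 1$ from \eqref{eq:min seq} gives
\[
1 = \|u_n\|_q^q \leq \|u_n\|_{p^{\ast}}^{p^{\ast}} \, \|u_n\|_{\ell^{\infty}}^{q-p^{\ast}}.
\]

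Next I would apply the critical discrete Sobolev inequality \eqref{eq:discrete sobo1} to control $\|u_n\|_{p^{\ast}}$. Because $\{u_n\}$ is minimizing, $\|u_n\|_{D^{1,p}}^p \to S$, so $\{u_n\}$ is uniformly bounded in $D^{1,p}(\mathbb{Z}^N)$. Thus there exists a constant $M > 0$ such that
\[
\|u_n\|_{p^{\ast}} \leq C_p \|u_n\|_{D^{1,p}} \leq M, \qquad \forall n \geq 1.
\]

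Combining the two displays yields $1 \leq M^{p^{\ast}} \|u_n\|_{\ell^{\infty}}^{q-p^{\ast}}$, and therefore
\[
\|u_n\|_{\ell^{\infty}} \geq M^{-p^{\ast}/(q-p^{\ast})} > 0,
\]
uniformly in $n$, which gives the claimed strict positivity of $\varliminf_{n \to \infty} \|u_n\|_{\ell^{\infty}}$.

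There is no real obstacle here: the argument hinges on the strict inequality $q > p^{\ast}$, which gives the nontrivial exponent $q - p^{\ast} > 0$ in the interpolation. In the critical case $q = p^{\ast}$ the same scheme collapses, which is exactly why the supercriticality assumption is indispensable in excluding the vanishing scenario later in the concentration-compactness argument.
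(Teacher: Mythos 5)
Your argument is correct and is essentially the paper's own proof: both rest on the interpolation $1=\lVert u_{n}\rVert_{q}^{q}\leq\lVert u_{n}\rVert_{q'}^{q'}\lVert u_{n}\rVert_{\infty}^{q-q'}$ followed by a Sobolev bound on the intermediate norm, the only (immaterial) difference being that you take $q'=p^{\ast}$ and invoke the critical inequality (\ref{eq:discrete sobo1}), whereas the paper picks $q'\in(p^{\ast},q)$ and uses (\ref{eq:sobo1}).
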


\begin{proof}
Choosing $q'$ such that $p^{\ast}<q'<q<\infty$, by interpolation
inequality we have 
\[
1=\lVert u_{n}\rVert_{q}^{q}\leq\lVert u_{n}\rVert_{q'}^{q'}\lVert u_{n}\rVert_{\infty}^{q-q'}\leq C_{q',p}^{q'}\lVert u_{n}\rVert_{D^{1,p}}^{q'}\lVert u_{n}\rVert_{\infty}^{q-q'},
\]
where $C_{q',p}$ is the constant in the Sobolev inequality (\ref{eq:sobo1}).

By taking the limit, we obtain 
\[
1\leq C_{q',p}^{q'}S^{\frac{q'}{p}}\varliminf\limits _{n\to\infty}\lVert u_{n}\rVert_{\infty}^{q-q'}.
\]
This proves the lemma. 
\end{proof}
\begin{rem*}
The maximum of $\lvert u_{n}\rvert$ is attainable since $\lVert u_{n}\rVert_{q}=1$.
Define $v_{n}\left(i\right):=u_{n}(i+i_{n})$, where $\lvert u_{n}(i_{n})\rvert=\max\limits _{i}\lvert u_{n}(i)\rvert$.
Then the translation sequence $\left\{ v_{n}\right\} $ is uniformly
bounded in $D^{1,p}(\mathbb{Z}^{N})$, $\lVert v_{n}\rVert_{q}=1$
and $\lvert v_{n}(0)\rvert=\lVert u_{n}\rVert_{\infty}$. By Lemma
\ref{lem:lower bound}, passing to a subsequence if necessary, we
have 
\[
v_{n}\longrightarrow v\quad\;\textrm{pointwise\;in}\;\mathbb{Z}^{N\text{}},
\]
\begin{equation}
\lvert v(0)\rvert=\varliminf\limits _{n\to\infty}\lVert u_{n}\rVert_{\ell^{\infty}}>0.\label{eq:v(0)>0}
\end{equation}
\end{rem*}
Similarly, we have the following corollary for the discrete HLS inequality
(\ref{eq:hls2}). 
\begin{cor}
\label{cor:lower bound2}For $r,t>1,0<\lambda<N,\frac{1}{r}+\frac{\lambda}{N}>1+\frac{1}{t}$,
let $\left\{ f_{n}\right\} \subset\ell^{r}(\mathbb{Z}^{N})$ be a
maximizing sequence satisfying (\ref{eq:max squ}). Then $\varliminf\limits _{n\to\infty}\lVert f_{n}\rVert_{\ell^{\infty}}>0$. 
\end{cor}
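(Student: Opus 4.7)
The plan is to mirror the proof of Lemma~\ref{lem:lower bound}: introduce a ``critical'' companion exponent at which the HLS inequality \eqref{eq:hls2} holds as an actual critical inequality, then interpolate between the fixed $\ell^r$-norm and the $\ell^\infty$-norm to force a positive lower bound on $\|f_n\|_\infty$.

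First I would introduce $r^\ast\in(r,\infty)$ defined by $\frac{1}{r^\ast}=1+\frac{1}{t}-\frac{\lambda}{N}$. The supercritical hypothesis $\frac{1}{r}+\frac{\lambda}{N}>1+\frac{1}{t}$ is precisely the statement $\frac{1}{r^\ast}<\frac{1}{r}$, so $r^\ast>r$; and since $r>1$ forces $\frac{\lambda}{N}>\frac{1}{t}$, we also have $r^\ast>1$. With this choice of $r^\ast$ the relation $\frac{1}{r^\ast}+\frac{\lambda}{N}=1+\frac{1}{t}$ holds, so the critical discrete HLS inequality in convolution form (i.e.\ \eqref{eq:hls2} with $r$ replaced by $r^\ast$) applies and yields a finite constant $C_{r^\ast,t}$.

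Next I would combine this with the elementary interpolation
\[
\|f\|_{r^\ast}\leq \|f\|_r^{\,r/r^\ast}\,\|f\|_\infty^{\,1-r/r^\ast},
\]
which follows from $|f|^{r^\ast}=|f|^r\cdot|f|^{r^\ast-r}$. Since $\|f_n\|_r=1$, the critical HLS inequality gives
\[
\||i|^{-\lambda}\ast f_n\|_t\;\leq\; C_{r^\ast,t}\,\|f_n\|_{r^\ast}\;\leq\; C_{r^\ast,t}\,\|f_n\|_\infty^{\,1-r/r^\ast}.
\]
Passing to the limit $n\to\infty$, the left-hand side tends to $K>0$ (positivity of $K$ being clear from testing on any nonzero $f$), and therefore
\[
\varliminf_{n\to\infty}\|f_n\|_\infty \;\geq\; \Bigl(\tfrac{K}{C_{r^\ast,t}}\Bigr)^{\!r^\ast/(r^\ast-r)}\;>\;0,
\]
which is the desired lower bound.

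There is really no substantial obstacle here; the argument is a direct translation of the Sobolev proof. The only mildly delicate point is the bookkeeping in Step~1, i.e.\ checking that the supercritical condition is exactly what supplies the inequality $r^\ast>r$ needed for the interpolation exponent $1-r/r^\ast$ to be strictly positive, and that $r^\ast>1$ holds automatically so that the critical HLS inequality is legitimately available.
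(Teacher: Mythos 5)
Your proposal is correct and follows essentially the same route as the paper: apply the discrete HLS inequality at a higher exponent and interpolate between the fixed $\ell^r$-norm and the $\ell^\infty$-norm. The only cosmetic difference is that you work at the critical exponent $r^\ast$ itself (via the critical convolution form of \eqref{eq:dhls-1}), whereas the paper picks an intermediate $r'$ with $r<r'<r^\ast$ and invokes the supercritical inequality \eqref{eq:hls2} directly; both are legitimate and lead to the same bound.
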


\begin{proof}
Choosing $r'$ such that $r<r'<r^{\ast}<\infty$, by interpolation
inequality we have 
\[
C_{r',t}^{-r'}\lVert\mid i\mid^{-\lambda}\ast f_{n}\rVert_{t}^{r'}\leq\lVert f_{n}\rVert_{r'}^{r'}\leq\lVert f_{n}\rVert_{r}^{r}\lVert f_{n}\rVert_{\infty}^{r'-r}=\lVert f_{n}\rVert_{\infty}^{r'-r},
\]
where $C_{r',t}$ is the constant in the HLS inequality (\ref{eq:hls2}).

By taking the limit, we obtain 
\[
C_{r',t}^{-r'}K^{r'}\leq\varliminf\limits _{n\to\infty}\lVert f_{n}\rVert_{\ell^{\infty}}^{r'-r}.
\]
This proves the corollary. 
\end{proof}
Next, we give the proof %
\mbox{%
I%
} of Theorem 1. 
\begin{proof}[Proof %
\mbox{%
I%
} of Theorem 1]
Let $\left\{ u_{n}\right\} \subset D^{1,p}(\mathbb{Z}^{N})$ be a
minimizing sequence satisfying (\ref{eq:min seq}). And the translation
sequence $\left\{ v_{n}\right\} $ is defined in the Remark after
Lemma \ref{lem:lower bound}.

By equalities (\ref{eq:composition2}) and (\ref{eq:composition1})
in Lemma \ref{lem:Concentration-Compact}, passing to a subsequence
if necessary, we get 
\[
S=\lim\limits _{n\to\infty}\lVert v_{n}\rVert_{D^{1,p}}^{p}=\lVert v\rVert_{D^{1,p}}^{p}+\mu_{\infty},
\]
\[
1=\lim\limits _{n\to\infty}\lVert v_{n}\rVert_{q}^{q}=\lVert v\rVert_{q}^{q}+\nu_{\infty}.
\]
From the Sobolev inequality, (\ref{eq:infinite}) and the inequality
\begin{equation}
\left(a^{q}+b^{q}\right)^{p/q}\leq a^{p}+b^{p}\text{,}\;\forall a,b\geq0,\label{eq:trivial inequality}
\end{equation}
we get 
\begin{align*}
S & =\lVert v\rVert_{D^{1,p}}^{p}+\mu_{\infty}\\
 & \geq S((\lVert v\rVert_{q}^{q})^{p/q}+\nu_{\infty}^{p/q})\\
 & \geq S(\lVert v\rVert_{q}^{q}+\nu_{\infty})^{p/q}=S.
\end{align*}
Since $\left(a^{q}+b^{q}\right)^{p/q}<a^{p}+b^{p}$ unless $a=0$
or $b=0$, we deduce from (\ref{eq:v(0)>0}) that $\lVert v\rVert_{q}^{q}=1$.
By 
\[
\lVert v\rVert_{D^{1,p}}^{p}\geq S\lVert v\rVert_{q}^{p},
\]
we get 
\[
\lVert v\rVert_{D^{1,p}}^{p}=S=\lim\limits _{n\to\infty}\lVert v_{n}\rVert_{D^{1,p}}^{p}.
\]
That is, $v$ is a minimizer. 
\end{proof}
Now we are ready to prove Theorem \ref{thm:Cayley}. 
\begin{proof}[Proof of Theorem~\ref{thm:Cayley}]
By the same argument as in Lemma~\ref{lem:lower bound}, we can
show that 
\[
\varliminf\limits _{n\to\infty}\lVert u_{n}\rVert_{\ell^{\infty}}>0.
\]
Let $g_{n}\in\Gamma$ such that $|u_{n}(g_{n})|=\underset{g\in\Gamma}{\max}|u_{n}(g)|.$
Then for $v_{n}:=u_{n}(g_{n}g),$ we can prove the result verbatim
as in Theorem~1. 
\end{proof}

\section{Proof %
\mbox{%
II%
} for Theorem \ref{thm:main1} and Theorem \ref{thm:main2}}

In this section, we give another proof for Theorem \ref{thm:main1}
using the discrete Brézis-Lieb lemma. Then we prove Theorem \ref{thm:main2}
in a similar way. 
\begin{proof}[Proof %
\mbox{%
II%
} of Theorem \ref{thm:main1}]
Using Lemma \ref{lem:lower bound}, by the translation and taking
a subsequence if necessary as before, we can get a minimizing sequence
$\left\{ u_{n}\right\} $ satisfying (\ref{eq:min seq}), $u_{n}\longrightarrow u$
pointwise in $\mathbb{Z}^{N}$, and $\lvert u(0)\rvert>0$.

By Lemma \ref{lem:(Discrete-Brezis-Lieb-lemma)}, the inequality (\ref{eq:trivial inequality})
and the Sobolev inequality, then passing to a subsequence if necessary,
we have 
\begin{align}
S=\underset{n\rightarrow\infty}{\lim}\lVert u_{n}\rVert_{D^{1,p}}^{p} & =\underset{n\rightarrow\infty}{\lim}\frac{\lVert u_{n}\rVert_{D^{1,p}}^{p}}{\lVert u_{n}\rVert_{q}^{p}}=\underset{n\rightarrow\infty}{\overline{\lim}}\frac{\lVert u_{n}-u\rVert_{D^{1,p}}^{p}+\lVert u\rVert_{D^{1,p}}^{p}}{\left(\lVert u_{n}-u\rVert_{q}^{q}+\lVert u\rVert_{q}^{q}\right)^{p/q}}\label{eq:p/q-1}\\
 & \geq\underset{n\rightarrow\infty}{\overline{\lim}}\frac{\lVert u_{n}-u\rVert_{D^{1,p}}^{p}+\lVert u\rVert_{D^{1,p}}^{p}}{\lVert u_{n}-u\rVert_{q}^{p}+\lVert u\rVert_{q}^{p}}\nonumber \\
 & \geq\underset{n\rightarrow\infty}{\overline{\lim}}\frac{S\lVert u_{n}-u\rVert_{q}^{p}+\lVert u\rVert_{D^{1,p}}^{p}}{\lVert u_{n}-u\rVert_{q}^{p}+\lVert u\rVert_{q}^{p}}.\nonumber 
\end{align}
Since $u\text{\ensuremath{\not\equiv}}0$, we have that 
\[
\lVert u\rVert_{D^{1,p}}^{p}\leq S\lVert u\rVert_{q}^{p},
\]
which implies 
\[
\lVert u\rVert_{D^{1,p}}^{p}=S\lVert u\rVert_{q}^{p}.
\]
By (\ref{eq:p/q-1}), passing to a subsequence, we get
\[
\underset{n\rightarrow\infty}{\lim}\lVert u_{n}-u\rVert_{D^{1,p}}^{p}=S\underset{n\rightarrow\infty}{\lim}\lVert u_{n}-u\rVert_{q}^{p}.
\]
Since $0<\lVert u\rVert_{q}\leq\underset{n\rightarrow\infty}{\lim}\lVert u_{n}\rVert_{q}=1$,
it suffices to show that $\lVert u\rVert_{q}=1$. Suppose that it
is not true, i.e. $0<\lVert u\rVert_{q}=D<1$, then by Lemma \ref{lem:(Discrete-Brezis-Lieb-lemma)},
\[
\underset{n\rightarrow\infty}{\lim}\lVert u_{n}-u\rVert_{q}^{q}=\underset{n\rightarrow\infty}{\lim}\lVert u_{n}\rVert_{q}^{q}-\lVert u\rVert_{q}^{q}=1-D^{q}>0.
\]
However, $\left(a^{q}+b^{q}\right)^{p/q}<a^{p}+b^{p}$ if $a,$ $b>0$.
This yields a contradiction by (\ref{eq:p/q-1}).

Thus, $\lVert u\rVert_{q}=1$ and $u$ is a minimizer. 
\end{proof}
Taking measure spaces $(M,\varSigma,\mu)$ and $(M^{\prime},\varSigma^{\prime},\mu^{\prime})$
in \cite[Lemma 2.7]{L5} as $\mathbb{Z}^{N}$, we have the following
lemma. 
\begin{lem}
\label{lem:proof2}Let $A$ be a bounded linear operator from $\ell^{p}(\mathbb{Z}^{N})$
to $\ell^{q}(\mathbb{Z}^{N})$ with $1\leq p\leq q<\infty$. For $u\in\ell^{p}(\mathbb{Z}^{N})$,
$u\text{\ensuremath{\not\equiv}}0$, let 
\[
R(u)\coloneqq\frac{\lVert Au\rVert_{q}}{\lVert u\rVert_{p}},\text{ and \ensuremath{N\coloneqq\textrm{sup}\left\{ R(u)\mid u\in\ell^{p}(\mathbb{Z}^{N}),u\ensuremath{\not\equiv}0\right\} .} }
\]
Let $\left\{ u_{n}\right\} $ be a uniformly normed-bounded maximizing
sequence for $N$. Suppose that 
\[
u_{n}\longrightarrow u\ensuremath{\not\equiv}0\text{ pointwise,}
\]
\[
Au_{n}\longrightarrow Au\text{ pointwise .}
\]
Then $u$ is a maximizer, i.e. $R(u)=N$.

Moreover, if $p<q$ and $\underset{n\rightarrow\infty}{\lim}\lVert u_{n}\rVert_{p}=C$
exists, then $\lVert u\rVert_{p}=C$ and $\underset{n\rightarrow\infty}{\lim}\lVert Au_{n}\rVert_{q}=\lVert Au\rVert_{q}$. 
\end{lem}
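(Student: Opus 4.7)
The plan is to apply the discrete Br\'ezis--Lieb lemma (Lemma \ref{lem:(Discrete-Brezis-Lieb-lemma)}) to both $\{u_n\}$ in $\ell^p(\mathbb{Z}^N)$ and to $\{Au_n\}$ in $\ell^q(\mathbb{Z}^N)$, and then exploit the elementary inequality
\[
(x^q+y^q)^{p/q}\leq x^p+y^p,\qquad x,y\geq 0,\;1\leq p\leq q,
\]
which holds because $p/q\leq 1$ and is strict whenever $p<q$ and $xy>0$.

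By linearity, $A(u_n-u)=Au_n-Au\to 0$ pointwise. After passing to a subsequence so that $a:=\lim_{n\to\infty}\lVert u_n-u\rVert_p$ and $\alpha:=\lim_{n\to\infty}\lVert A(u_n-u)\rVert_q$ exist (both bounded, since $\{u_n\}$ is uniformly bounded in $\ell^p$ and $A$ is bounded), Lemma \ref{lem:(Discrete-Brezis-Lieb-lemma)} yields
\[
\lim_{n\to\infty}\lVert u_n\rVert_p^p=a^p+b^p,\qquad\lim_{n\to\infty}\lVert Au_n\rVert_q^q=\alpha^q+\beta^q,
\]
with $b:=\lVert u\rVert_p>0$ and $\beta:=\lVert Au\rVert_q$. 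Since $R(u_n)\to N$ and $\lim\lVert u_n\rVert_p\geq b>0$, taking $p$-th powers gives $\lim\lVert Au_n\rVert_q^p=N^p(a^p+b^p)$. Using $\alpha\leq Na$ together with the elementary inequality,
\[
N^p(a^p+b^p)=(\alpha^q+\beta^q)^{p/q}\leq\alpha^p+\beta^p\leq N^pa^p+\beta^p,
\]
which forces $\beta\geq Nb$. Combined with $\beta\leq Nb$ from the definition of $N$, this yields $\beta=Nb$, i.e.\ $R(u)=N$ and $u$ is a maximizer.

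For the moreover statement, assume $p<q$ and that $C:=\lim_{n\to\infty}\lVert u_n\rVert_p$ exists. Then all terms in the chain above coincide; in particular
\[
(\alpha^q+\beta^q)^{p/q}=\alpha^p+\beta^p\quad\text{and}\quad\alpha^p=N^pa^p.
\]
Since $p<q$ and $\beta=Nb>0$, the first equality forces $\alpha=0$, and the second then yields $a=0$ in the nontrivial case $N>0$. Therefore $\lVert u_n-u\rVert_p\to 0$ along the chosen subsequence, whence $\lVert u\rVert_p=C$, and the Br\'ezis--Lieb identity for $\{Au_n\}$ collapses to $\lVert Au_n\rVert_q\to\lVert Au\rVert_q$.

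The only real bookkeeping point is the subsequence extraction: the conclusion $R(u)=N$ depends only on $u$, and for the moreover statement every subsequential limit $a$ is forced to be $0$, so $\lVert u_n-u\rVert_p\to 0$ for the full sequence. I do not anticipate any substantive obstacle beyond this; the proof is essentially a transcription of Lieb's compactness argument with the Br\'ezis--Lieb lemma playing the role of the measure-theoretic splitting.
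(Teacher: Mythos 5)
Your proof is correct, and it takes essentially the same route as the paper: the paper gives no proof of this lemma, merely specializing Lieb's Lemma 2.7 to $\mathbb{Z}^N$, and your argument is a faithful reconstruction of Lieb's original proof via the Br\'ezis--Lieb lemma and the strict subadditivity of $x\mapsto x^{p/q}$. The only caveat, which you already flag, is the degenerate case $N=0$ (where the ``moreover'' conclusion $\lVert u\rVert_p=C$ can fail); this is implicitly excluded in the intended application since $A\not\equiv 0$ there.
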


Next, we can prove Theorem \ref{thm:main2} in a similar way. 
\begin{proof}[Proof of Theorem \ref{thm:main2}]
Let $A=\mid i\mid^{-\lambda}\ast:$ $\ell^{r}(\mathbb{Z}^{N})$$\longrightarrow$$\ell^{t}(\mathbb{Z}^{N})$.
By Corollary \ref{cor:lower bound2} and Lemma \ref{lem:proof2} we
obtain a maximizer $f$ for $K$. And by Lemma \ref{lem:exist g},
there exists a unique $g\in\ell^{s}(\mathbb{Z}^{N})$ with $\lVert g\rVert_{s}=1$
such that $(f,g)$ is a maximizing pair for (\ref{eq:hls1}). 
\end{proof}
Finally, we prove Corollary \ref{cor:p-Laplaican} in Section 1. 
\begin{proof}[Proof of Corollary \ref{cor:p-Laplaican}]
By Theorem \ref{thm:main1} there exists a minimizer $u$ for $S$.
Replacing $u$ by $\lvert u\rvert$, we know that $\lvert u\rvert$
is still a minimizer. Therefore, we get a non-negative solution $u$.
It follows from the Lagrange multiplier that $u$ is a solution of
equation (\ref{eq:p-Laplaican}). The maximum principle yields that
$u$ is positive. 
\end{proof}
We prove Corollary \ref{cor:E-R eq} in Section 1. 
\begin{proof}[Proof of Corollary \ref{cor:E-R eq}]
Define the functional 
\[
J(f,g)\coloneqq\sum_{\substack{i,j\in\mathbb{Z}^{N}\\
i\neq j
}
}\frac{f(i)g(j)}{\mid i-j\mid^{\lambda}}.
\]
From Theorem \ref{thm:main2} we know that there is a maximizing pair
$\left(f,g\right)$ for (\ref{eq:hls1}) under the constraint $\lVert f\rVert_{r}=\lVert g\rVert_{s}=1$.
A computation yields the Euler-Lagrange equation (\ref{eq:E-L eq})
for (\ref{eq:hls1}). Replace $\left(f,g\right)$ by $\left(\mid f\mid,\mid g\mid\right)$,
which is still a maximizing pair. The pair is positive by (\ref{eq:E-L eq}). 
\end{proof}
$\mathbf{\boldsymbol{\mathbf{Acknowledgements}\mathbf{}\text{:}}}$
The authors would like to thank Genggeng Huang for helpful discussions
and suggestions. B.H. is supported by NSFC, no.11831004 and no. 11926313.

 \bibliographystyle{plain}
\bibliography{sobolev_ref}

\end{document}